\theoremstyle{plain}
\newtheorem{lemma}{Lemma}
\newtheorem{theorem}{Theorem}
\newtheorem{example}{Example}
\newtheorem{assumption}{Assumption}
\newtheorem{definition}{Definition}
\newtheorem{conjecture}{Conjecture}
\def\Z{{\mathbb Z}}
\def\R{{\mathbb R}}
\def\N{{\mathbb N}}
\newcommand{\NP}{$\mathcal{NP}$}
\newcommand{\conv}{\operatorname{conv}}
\newcommand{\st}{\text{s.t.}}
\newcommand{\supp}{\operatorname{supp}}
\newcommand{\pare}[1]{\left(#1\right)}
\newcommand{\bra}[1]{\left\{#1\right\}}
\newcounter{claim}
\newcommand{\setwindow}[5]{
\def\xmin{#1}%
\def\ymin{#2}%
\def\xmax{#3}%
\def\ymax{#4}%
\pstFPsub\viewingwidth{#3}{#1}%
\pstFPdiv\result{\strip@pt#5}{\viewingwidth}%
\psset{unit=\result pt}}
\author[1]{Alberto Del Pia\thanks{delpia@wisc.edu}}
\author[2]{Jeff Linderoth\thanks{linderoth@wisc.edu}}
\author[3]{Haoran Zhu\thanks{hzhu94@wisc.edu, corresponding author}} 
\affil[1,2,3]{\small Department of Industrial and Systems Engineering, University of Wisconsin-Madison}
\affil[1,2]{\small Wisconsin Institute for Discovery, University of Wisconsin-Madison}
\title{New Classes of Facets for Complementarity Knapsack Problems}
\date{}
\begin{document}
\maketitle

\begin{abstract}
The \emph{complementarity knapsack problem} (CKP) is a knapsack problem with real-valued variables and complementarity conditions between pairs of variables. 
We extend the polyhedral studies of de Farias et~al.~for CKP, by proposing three new families of cutting-planes that are obtained from a combinatorial concept known as \emph{pack}. 
Sufficient conditions for these inequalities to be facet-defining, based on the new concept of a \emph{maximal switching pack}, are also provided.
Moreover, we answer positively a conjecture by de Farias et~al.~about the separation complexity of the inequalities introduced in their work, and propose efficient separation algorithms for our newly defined cutting-planes. 
\end{abstract}

\emph{Key words:}
Complementarity knapsack polytope; Complementarity problem; SOS1 constraints; Branch-and-cut.

\section{Introduction}



In this paper, we investigate cutting-planes for the \emph{complementarity knapsack problem} (CKP).
Let $M = \{1, \ldots, m\}$ and, for every $i \in M,$ let $N_i = \{1, \ldots, n_i\}$, for some $m, n_i \in \N$. Let $b \in \R_+$ and $c_{ij}, a_{ij} \in \R_+$ for every $i \in M, j \in N_i$.
Then the \emph{CKP} is
\begin{align}
\max \quad & \sum_{i \in M} \sum_{j \in N_i} c_{ij} x_{ij} \label{CKP} \tag{CKP} \\
\st \quad & \sum_{i \in M} \sum_{j \in N_i} a_{ij} x_{ij} \leq b, \label{constraint: knapsack} \\
& x_{ij} \cdot x_{ij'} = 0, \qquad j \neq j' \in N_i, i \in M, \label{constraint: complementarity} \\
& 0 \leq x_{ij} \leq 1, \qquad j \in N_i, i \in M. \nonumber
\end{align} 
The constraints in \eqref{constraint: complementarity}, enforcing that at most one of the variables in the set $N_i$ takes a positive value are usually referred to as \emph{complementarity constraints}. 
Often, in the literature, the set of variables $\{x_{ij}\}_{j \in N_i}$, for any $i \in M,$ is called a \emph{special ordered set of type 1} (SOS1). 
When the variables are binary, the corresponding complementarity constraint is often called a \emph{generalized upper bound} (GUB).

Traditionally, \eqref{CKP} is modeled by introducing binary variables $y_{ij}$ for any $i \in M, j \in N_i$, and the complementarity constraints~\eqref{constraint: complementarity} are formulated by $x_{ij} \leq y_{ij}$ and $\sum_{j \in N_i} y_{ij} \leq 1$ for any $i \in M$. 
Then, cutting-planes are added into this mixed-integer programming (MIP) formulation in order to obtain a tighter relaxation, specifically cutting-planes from the knapsack polytope, see, e.g., \cite{balas1975facets,MR4259102,del2022multi,padberg19801,weismantel19970}.  
Note that all cutting-planes that are added into such a formulation are ``general purpose'', in the sense that their derivation does not take into consideration the structure of the specific problem at hand.
This approach naturally has several computational disadvantages, including the  size increase of the problem and the loss of structure, see \cite{de2001branch,fischer.pfetsch:18}. 
For these reasons, de Farias et al.~\cite{de2002facets} conducted a polyhedral study of CKP in the space of the continuous variables.
In particular, the authors presented two families of facet-defining inequalities, called \emph{fundamental complementarity inequalities}, that can be derived by lifting \emph{cover inequalities} of CKP. \footnote{These are not the typical ``cover inequalities'' in 0-1 programming} 
In a more recent paper, de Farias et al.~\cite{de2014branch} first generalized the two families of inequalities introduced in \cite{de2002facets}, and then numerically tested the performance of these inequalities by using them within a branch-and-cut algorithm. 
Our paper is motivated by these papers of de~Farias et al.~\cite{de2002facets, de2014branch}.
In particular, we further investigate the polyhedral structure of CKP in the space of the original variables. 
We remark that, the idea of dispensing with the use of auxiliary binary variables to model complementarity constraints and enforcing those constraints directly appears in a number of papers, including \cite{beale1970special, beaumont1990algorithm, keha2006branch, de2013polyhedral}.

In a \emph{branch-and-cut} procedure using the inequalities proposed in \cite{de2014branch} to solve CKP, a crucial step is \emph{separation}---given an optimal solution of the current linear relaxation, how can we efficiently identify an inequality in a given class to separate the given solution and strengthen the relaxation? 
de~Farias et al.~conjectured that the separation of their inequalities is \NP-complete. 
In our first contribution of this paper, we show that it is indeed \NP-complete to separate both classes of inequalities proposed in \cite{de2014branch}. 
Our second contribution is
the introduction of novel valid inequalities for the convex hull of the feasible set of \eqref{CKP}, which is coined the \emph{complementarity knapsack polytope} by de Farias et al.
We also provide sufficient conditions for these inequalities to be facet-defining. 
A key difference between our inequalities and the ones proposed in \cite{de2002facets, de2014branch}, is that the inequalities that we introduce cannot be derived via the lifting procedure proposed in \cite{de2002facets}. This is further discussed at the end of \cref{sec: cuts}.

This paper is organized as follows. 
In \cref{sec: recap}, we introduce some notation, assumptions, and previous results from the literature. 
In \cref{sec: complexity}, we provide the positive answers to the conjectures made by de~Farias et al.~\cite{de2014branch}. 
In \cref{sec: cuts}, we introduce the concept of \emph{pack} and \emph{maximal switching pack}, and propose three new families of cutting-planes for \eqref{CKP} which originate from packs.
We also show that these inequalities are facet-defining if the pack itself, or some particular subset of the pack, is a maximal switching pack. In \cref{sec: heuristics}, we propose efficient separation algorithms for the proposed cutting-planes.
These algorithms were not present in the conference version of this paper \cite{10.1007/978-3-031-18530-4_1}, where they were posed as interesting open questions.
In \cref{sec: future}, we discuss directions for future research.

\section{Notations, assumptions, and previous work}
\label{sec: recap}

To the best of our knowledge, the papers \cite{de2002facets,de2014branch} are the only polyhedral studies of the complementarity knapsack polytope. 
Using the same notation as in \cite{de2002facets}, we denote by $S$ the set of feasible solutions of \eqref{CKP}, and by $PS: = \conv(S)$ the \emph{complementarity knapsack polytope}. 
We denote by $d$ the number of variables in the problem, i.e., $d := \sum_{i \in M} n_i$. 
For ease of notation, we denote by $ij$ the ordered pair $(i,j)$.
We also identify any set containing exactly one element with the element itself. 
We denote by $I: = \cup_{i \in M} (i \times N_i)$, which is simply the set of all indices of $x$. 
We denote by $M_0$ the set of indices in $M$ that do not lie in any complementarity constraint, i.e., $M_0: = \{i \in M \mid n_i = 1\}$.
For $T \subseteq I,$ we define $M_T: = \{i \in M \mid ij \in T \text{ for some } j \in N_i\}$.
For any $n \in \N,$ we let $[n]: = \{1,\ldots, n\}$. 
 For any vector $v \in \R^n,$ we denote by $\supp(v)$ the support set of vector $v$, i.e., $\supp(v) := \{i \in [n] \mid v_i  \neq 0\}$.

As in \cite{de2014branch}, we make the following assumptions:
\begin{assumption}[Assumption 1 in  \cite{de2014branch}]
$M \neq M_0$.
\end{assumption}

\begin{assumption}[Assumption 2 in  \cite{de2014branch}]
$\sum_{i \in M} \max\{a_{i1}, \ldots, a_{in_i}\} > b$.
\end{assumption}

\begin{assumption}[Assumption 3 in  \cite{de2014branch}]
\label{asmp: 3}
$b > 0, a_{ij} \geq 0$, for all $ij \in I$.
\end{assumption}

\begin{assumption}[Assumption 6 in  \cite{de2014branch}]
\label{asmp: 4}
$a_{i1} \geq \ldots \geq a_{in_i},$ for all $i \in M - M_0.$
\end{assumption}

All these assumptions can be made without loss of generality (w.l.o.g.): If the first two assumptions do not hold, then problem~\eqref{CKP} is trivial. 
For Assumption~\ref{asmp: 3}, if $b_{ij} \leq 0$ for some index $ij$, then $x_{ij}$ can be fixed to zero. Assumption~\ref{asmp: 4} is made because we are dealing with a single knapsack inequality, and we make this assumption to simplify the presentation of this paper.

Next, we introduce a few basic properties of $PS$, and review the concepts and results introduced in de Farias et al.~\cite{de2002facets, de2014branch}. Throughout this section, known results are not proved, but referenced. The results that are proved are, to the best of our knowledge, new. 

\begin{lemma}[Proposition~2 in \cite{de2002facets}]
$PS$ is full-dimensional.
\end{lemma}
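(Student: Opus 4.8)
The plan is to show directly that $PS$ has dimension $d := \sum_{i \in M} n_i$ by exhibiting $d+1$ affinely independent points of $S$ (recall $S \subseteq \R^d$, so this is exactly the condition for full-dimensionality). No appeal to the knapsack or complementarity structure beyond feasibility checks is needed.

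The points I would use are the origin together with one small perturbation along each coordinate axis. First, $0 \in S$: it satisfies $0 \le x \le 1$, it satisfies the knapsack constraint~\eqref{constraint: knapsack} because $b \ge 0$, and it satisfies every complementarity constraint~\eqref{constraint: complementarity} vacuously. Next, set $\bar a := \max_{ij \in I} a_{ij}$ and choose a single scalar $\epsilon > 0$ small enough that $\epsilon \le 1$ and $\bar a \, \epsilon \le b$ — concretely $\epsilon := \min\{1, b/\bar a\}$, or $\epsilon := 1$ if $\bar a = 0$; here Assumption~\ref{asmp: 3} ($b > 0$) is precisely what guarantees $\epsilon > 0$. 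For each index $ij \in I$, let $e_{ij} \in \R^d$ be the unit vector with a $1$ in coordinate $ij$ and $0$ elsewhere. Then $\epsilon\, e_{ij} \in S$: the bounds $0 \le \epsilon \le 1$ hold by the choice of $\epsilon$; the knapsack constraint holds since $a_{ij}\epsilon \le \bar a \epsilon \le b$; and within each block $N_i$ at most one coordinate is nonzero, so every product $x_{ij} x_{ij'}$ in~\eqref{constraint: complementarity} vanishes.

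It then remains only to check affine independence of the $d+1$ points $0$ and $\{\epsilon\, e_{ij} : ij \in I\}$. Subtracting the point $0$ from each of the other $d$ points leaves the vectors $\{\epsilon\, e_{ij} : ij \in I\}$, which are nonzero multiples of the distinct standard basis vectors of $\R^d$ and hence linearly independent; so the $d+1$ points are affinely independent and $\dim(PS) = d$. There is no real obstacle here: the whole argument is elementary, and the only hypothesis actually used is $b > 0$, which is what keeps the axis perturbations $\epsilon\, e_{ij}$ feasible.
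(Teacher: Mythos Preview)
Your proof is correct. The paper itself does not give a proof of this lemma; it simply cites it as Proposition~2 in \cite{de2002facets}, so there is no in-paper argument to compare against. Your approach --- the origin together with small positive multiples of the coordinate unit vectors --- is in any case the standard way to establish full-dimensionality for sets of this type, and it goes through exactly as you describe using only Assumption~\ref{asmp: 3} ($b>0$).
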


\begin{lemma}
\label{prop: CK_vertex_simple}
If $\tilde x$ is a non-integral vertex of $PS$, then it has exactly one fractional component, and we have $ \sum_{i \in M} \sum_{j \in N_i} a_{ij} \tilde x_{ij} = b$.
\end{lemma}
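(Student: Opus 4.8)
The plan is to use a direct perturbation argument. First I would observe that every vertex of $PS$ belongs to $S$: by Carathéodory's theorem any point of $PS = \conv(S)$ is a convex combination of finitely many points of $S$, and if that point is a vertex it must coincide with every point appearing in the combination with positive weight. So $\tilde x \in S$; in particular $\tilde x$ satisfies the complementarity constraints~\eqref{constraint: complementarity}, which forces its fractional components to lie in pairwise distinct blocks $N_i$, since two fractional variables in a common block would have a nonzero product. Writing $F := \{ij \in I : 0 < \tilde x_{ij} < 1\}$, non-integrality of $\tilde x$ gives $F \neq \emptyset$, and for every $ij \in F$ the companion variables $x_{ij'}$ with $j' \in N_i \setminus \{j\}$ vanish at $\tilde x$.

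Next I would argue by producing, in each undesired case, a nonzero direction $\delta \in \R^d$ with $\tilde x \pm \varepsilon\delta \in S$ for all sufficiently small $\varepsilon > 0$; this contradicts $\tilde x$ being a vertex, since then $\tilde x = \tfrac12\big[(\tilde x+\varepsilon\delta)+(\tilde x-\varepsilon\delta)\big]$ is a midpoint of two distinct points of $PS$. The key observation is that any $\delta$ supported on $F$ keeps the box constraints and the complementarity constraints satisfied for small $\varepsilon$: each coordinate $\tilde x_{ij}$ with $ij \in F$ is strictly between $0$ and $1$ and stays positive under a small perturbation, while the companion coordinates in its block stay at $0$ and the blocks disjoint from $F$ are untouched. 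Hence only the knapsack inequality~\eqref{constraint: knapsack} restricts the choice of $\delta$.

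With this setup I would conclude as follows. (i) If some $ij \in F$ has $a_{ij} = 0$, take $\delta = e_{ij}$: the left-hand side of~\eqref{constraint: knapsack} is unchanged, so $\tilde x \pm \varepsilon\delta \in S$, a contradiction; therefore $a_{ij} > 0$ for all $ij \in F$. (ii) If $|F| \geq 2$, pick distinct $i_1 j_1, i_2 j_2 \in F$ and set $\delta_{i_1 j_1} := a_{i_2 j_2}$, $\delta_{i_2 j_2} := -a_{i_1 j_1}$, and $\delta := 0$ elsewhere; then $\sum_{ij} a_{ij}\delta_{ij} = 0$, so $\sum_{ij} a_{ij}(\tilde x_{ij} \pm \varepsilon\delta_{ij}) = \sum_{ij} a_{ij}\tilde x_{ij} \le b$, and $\delta \neq 0$ by~(i) — again a contradiction. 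Hence $|F| = 1$, say $F = \{ij\}$ with $a_{ij} > 0$. (iii) Finally, if $\sum_{i \in M}\sum_{j \in N_i} a_{ij}\tilde x_{ij} < b$, then $\delta = e_{ij}$ keeps the knapsack inequality satisfied for small $\varepsilon$, contradicting vertexhood; therefore $\sum_{i \in M}\sum_{j \in N_i} a_{ij}\tilde x_{ij} = b$.

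The argument is elementary and I do not anticipate a genuine obstacle. The two points that require care are the reduction that vertices of $\conv(S)$ lie in $S$, and the verification that the perturbations preserve feasibility — in particular the complementarity constraints — which rests entirely on the fact that the fractional coordinates live in distinct blocks and are bounded away from $0$ and $1$, so that small movements along any $F$-supported direction stay inside $S$.
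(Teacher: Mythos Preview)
Your proof is correct and follows essentially the same perturbation strategy as the paper: both arguments show that a vertex with two fractional coordinates (necessarily in distinct blocks) can be written as a midpoint of two feasible points, and then that slack in the knapsack constraint allows perturbing the single remaining fractional coordinate. Your version is slightly more careful in two places the paper leaves implicit --- you explicitly argue that vertices of $\conv(S)$ lie in $S$, and you dispose of the degenerate case $a_{ij}=0$ before constructing the knapsack-preserving direction --- whereas the paper phrases the two-fractional-component step as a projection to a 2-dimensional polytope rather than writing down the direction $\delta$ directly.
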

\begin{proof}
If $\tilde x$ has at least two fractional components, say, $\tilde x_{i' j'}, \tilde x_{i'' j''} \in (0,1)$, for some $i' \neq i'' \in M$, $j' \in N_{i'}$, $j'' \in N_{i''}$. 
Then the 2-dimensional point $(\tilde x_{i' j'}, \tilde x_{i'' j''})$ is contained in the 2-dimensional polyhedron 
$$
\bra{ (x_{i'j'}, x_{i''j''}) \in [0,1]^2 \mid a_{i'j'} x_{i'j'} + a_{i''j''} x_{i''j''} \leq b - \sum_{i \in M} \sum_{j \in N_i} a_{ij} \tilde x_{ij} + a_{i'j'} \tilde x_{i'j'} + a_{i''j''} \tilde x_{i''j''}}.
$$
It is simple to check that the extreme points of this 2-dimensional polyhedron all have at most one fractional component, therefore $(\tilde x_{i' j'}, \tilde x_{i'' j''})$ can be written as the convex combination of some other 2-dimensional points in the above polyhedron. 
Note that replacing the $x_{i'j'}$ and $x_{i''j''}$ components of $\tilde x$ with the components of these 2-dimensional points will lead to vectors in $S$, and $\tilde x$ can be written as the convex combination of these resulting points.
This gives us a contradiction, since $\tilde x$ is an extreme point of $PS$  by assumption.

We now assume, without loss of generality, that $\tilde x_{11}$ is the only fractional component of vertex $\tilde x$. 
If $ \sum_{i \in M} \sum_{j \in N_i} a_{ij} \tilde x_{ij} < b$, then replacing the $x_{11}$ component of $\tilde x$ by $\tilde x_{11} + \epsilon$ or $\tilde x_{11} - \epsilon$, for some small enough $\epsilon$, will still satisfy the inequality $ \sum_{i \in M} \sum_{j \in N_i} a_{ij} x_{ij} \leq b$ and the complementarity constraints. But this also implies that $\tilde x$ is not a vertex of $PS$, which gives a contradiction.
\end{proof}

 \emph{Lifting} is an important concept in integer programming that is  used to strengthen valid inequalities \cite{balas1975facets, balas1978facets, crowder1983solving, gu1998lifted, gu2000sequence}. 
In the case of 0-1 knapsack problems, the \emph{cover inequalities} are one of the most well-known family of cuts.
Applying the lifting process to cover inequalities produces the widely used \emph{lifted cover inequalities}, discovered independently by Balas \cite{balas1975facets} and Wolsey \cite{wolsey1975faces}. (See also the survey \cite{atamturk2005cover}).
In \cite{de2002facets}, the authors extended the concept of a cover and cover inequalities to CKPs as follows:
\begin{definition}[Definition~1 in \cite{de2002facets}]
Let $C = \{i_1j_1, \ldots, i_k j_k\} \subset I, $ where $i_1, \ldots, i_k$ are all distinct. The set $C$ is called a \emph{cover} if $\sum_{ij \in C} a_{ij} > b$. Given a cover $C$, the inequality 
\begin{equation}
\sum_{ij \in C} a_{ij} x_{ij} \leq b
\end{equation}
is called a cover inequality.
\end{definition}

Next, we present two families of cutting-planes for $PS$ proposed in \cite{de2014branch}, which are obtained by sequentially lifting the cover inequalities with respect to covers which satisfy different conditions.
The obtained inequalities generalize the two families of inequalities given in \cite{de2002facets}. 


\begin{theorem}[Theorem~3 in \cite{de2014branch}]
\label{theo: theo1_de2002}
Let $C$ be a cover. Denote the elements of $C$ as $ir_i$, $\forall i \in M_C$. 
Assume that $\sum_{i \in M_C-i'} a_{i r_i} + a_{i'j'} < b$ for some $i' \in M_C$ and $j' \in \{j \in N_{i'} \mid r_{i'} < j' \leq n_{i'}\}$. Then,
\begin{equation}
\label{facet_de_1}
\sum_{i \in M_C} \sum_{j=1}^{r_i - 1}a_{i r_i} x_{ij} + \sum_{i \in M_C} \sum_{j=r_i}^{n_i} \max \left\{a_{ij}, b - \sum_{k \in M_C - i} a_{k r_k} \right\} x_{ij} \leq b
\end{equation}
is valid for $PS$. Inequality~\eqref{facet_de_1} is facet-defining when $r_i = 1$, $\forall i \in M_C$. 
\end{theorem}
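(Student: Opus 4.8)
The plan is to prove validity by a direct case analysis over the feasible points, and then to establish the facet claim (in the case $r_i=1$ for all $i\in M_C$) by showing that the only linear equation satisfied by every point of the induced face is a scalar multiple of the defining one, which suffices because $PS$ is full-dimensional.

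\emph{Validity.} Fix $x\in S$ and, for each $i\in M_C$, let $\sigma(i)$ be the index of the positive variable in block $i$ (or declare the block inactive). Put $\Delta:=\sum_{i\in M_C}a_{ir_i}-b$, which is positive since $C$ is a cover, and let $G$ be the set of $i\in M_C$ with $\sigma(i)\ge r_i$ for which the coefficient $\max\{a_{i\sigma(i)},\,b-\sum_{k\in M_C-i}a_{kr_k}\}$ equals its second term. Using Assumption~\ref{asmp: 4} one checks that for every $i\in M_C\setminus G$ the contribution of block $i$ to the left-hand side of~\eqref{facet_de_1} is at most $a_{ir_i}$, and, when $G=\emptyset$, is even at most the knapsack weight $a_{i\sigma(i)}x_{i\sigma(i)}$ of that block; hence if $G=\emptyset$ the left-hand side is at most $\sum_{ij}a_{ij}x_{ij}\le b$. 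If $G\neq\emptyset$, observe that $b-\sum_{k\in M_C-i}a_{kr_k}=a_{ir_i}-\Delta$, so the contribution of a block $i\in G$ is at most $a_{ir_i}-\Delta$; summing over $M_C$ gives left-hand side $\le\sum_{i\in M_C}a_{ir_i}-|G|\Delta=b-(|G|-1)\Delta\le b$.

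\emph{Facet.} Assume $r_i=1$ for all $i\in M_C$ and write $\beta_i:=b-\sum_{k\in M_C-i}a_{k1}$ and $d_{ij}:=\max\{a_{ij},\beta_i\}$, so the inequality reads $\sum_{i\in M_C}\sum_{j\in N_i}d_{ij}x_{ij}\le b$ with $d_{i1}=a_{i1}$ because $C$ is a cover; let $F$ be the corresponding face of $PS$. The hypothesis says precisely that the point $\bar x$ with $\bar x_{k1}=1$ for $k\in M_C-i'$ and $\bar x_{i'j'}=1$ is feasible with strictly positive knapsack slack, and one checks $\bar x\in F$ and $\beta_{i'}>0$. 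Now suppose $\pi^\top x=\pi_0$ for every $x\in F$. For $i\notin M_C$ and small $\epsilon>0$, $\bar x+\epsilon e_{ij}\in F$ (the coefficient $d_{ij}$ is $0$), so $\pi_{ij}=0$; likewise any block $i\in M_C$ with $a_{i1}=0$ must have $\beta_i<0$ (again by the cover property), hence $d_{ij}=0$ for all $j$, and a similar perturbation gives $\pi_{ij}=0$. We may thus assume $a_{i1}>0$ for all $i\in M_C$; note $i'$ is such a block since $\beta_{i'}>a_{i'j'}\ge0$ forces $a_{i'1}>0$. For block $i'$, the points $x^{(i'j)}$ with $x^{(i'j)}_{k1}=1$ for $k\in M_C-i'$ and $x^{(i'j)}_{i'j}=\beta_{i'}/d_{i'j}$ all lie in $F$, and comparing $\pi^\top x^{(i'j)}$ with $\pi^\top\bar x=\pi^\top x^{(i'j')}$ gives $\pi_{i'j}=\lambda d_{i'j}$ for all $j\in N_{i'}$, where $\lambda:=\pi_{i'1}/a_{i'1}$. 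For each $i\in M_C-i'$, moving from $x^{(i'1)}$ along $-e_{i1}+(a_{i1}/a_{i'1})e_{i'1}$ stays in $F$ and yields $\pi_{i1}=\lambda a_{i1}$; substituting back into $\pi^\top\bar x=\pi_0$ then gives $\pi_0=\lambda b$.

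The remaining coordinates $\pi_{ij}$ with $i\in M_C-i'$ and $j\ge2$ are the crux, and I expect this to be the main obstacle: such a variable has coefficient $d_{ij}\le a_{i1}$, so on its own it cannot compensate for lowering the other blocks, and one must instead keep every other block of $M_C$ at its maximum. If $\beta_i>0$, the point with $x_{k1}=1$ for all $k\in M_C-i$ and $x_{ij}=\beta_i/d_{ij}$ lies in $F$ and forces $\pi_{ij}=\lambda d_{ij}$. If $\beta_i\le0$ (so $d_{ij}=a_{ij}$ and $\sum_{k\in M_C-i}a_{k1}\ge b$), I would instead first produce a point $p\in F$ that saturates the knapsack using only the variables $\{x_{k1}:k\in M_C-i\}$, leaving block $i$ at zero, then trade a little of some $x_{k1}$ with $a_{k1}>0$ against $x_{ij}$ along $e_{ij}-(a_{ij}/a_{k1})e_{k1}$, again staying in $F$ and obtaining $\pi_{ij}=\lambda a_{ij}$ (the case $a_{ij}=0$ being trivial). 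Hence $(\pi,\pi_0)=\lambda(d,b)$, and since $F$ lies in a hyperplane while $PS$ is full-dimensional, $F$ is a proper face and therefore a facet. Throughout, the routine but essential bookkeeping is to check for each constructed point that the knapsack constraint is tight (so that the point really lies in $F$, not just in $PS$) and that all box constraints $0\le x\le1$ hold — the latter being where the possible negativity of some $\beta_i$ must be watched carefully.
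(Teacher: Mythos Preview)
The paper does not prove this theorem: it is quoted verbatim as Theorem~3 of \cite{de2014branch} and stated without proof, so there is no ``paper's own proof'' to compare against. Your argument therefore has to be judged on its own, and it is essentially correct.

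Your validity proof is clean: the key identity $b-\sum_{k\in M_C-i}a_{kr_k}=a_{ir_i}-\Delta$ makes the case split transparent, and the bound $\text{LHS}\le\sum_{i}a_{ir_i}-|G|\Delta=b-(|G|-1)\Delta$ is exactly right once $|G|\ge1$. The $G=\emptyset$ case reduces to the knapsack bound because Assumption~\ref{asmp: 4} guarantees each block's coefficient is at most its knapsack weight.

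Your facet argument via the multiplier technique is also sound. A couple of places deserve one more sentence of care. First, when you dispose of blocks $i\in M_C$ with $a_{i1}=0$ by ``a similar perturbation,'' note that $\bar x$ has $\bar x_{i1}=1$, so you cannot perturb $x_{ij}$ directly there; you should first pass to $\hat x:=\bar x-e_{i1}$, which is still in $F$ since $d_{i1}=0$, and perturb from $\hat x$. Second, in the $\beta_i\le0$ branch you assert the existence of $p\in F$ supported on $\{x_{k1}:k\in M_C-i\}$ with the knapsack tight; this follows by a continuity/greedy argument from $\sum_{k\in M_C-i}a_{k1}\ge b>0$, and the existence of some $k$ with $p_{k1}>0$ and $a_{k1}>0$ (needed for the trade) follows from $b>0$. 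With these small clarifications the proof goes through; your closing caveat about checking box constraints and knapsack tightness is well placed, and in each construction those checks do hold.
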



\begin{theorem}[Theorem~4 in \cite{de2014branch}]
\label{theo: theo2_de2002}
Let $C$ be a cover and $i'j' \in C$ with $j' < n_{i'}$. 
Denote the elements of $C$ other than $i'j'$ as $i t_i$, $\forall i \in M_C - i'$. 
Suppose that $\sum_{i \in M_C - i'}a_{i t_i} + a_{i' n_{i'}} < b$. Then,
\begin{equation}
\label{facet_de_2}
\begin{split}
& \sum_{j \in N_{i'}}  \max \left\{a_{i'j}, b - \sum_{k \in M_C - i'} a_{k t_k} \right\} x_{i'j}  \\
& + \sum_{i \in M_C - i'} \left(\sum_{j =1}^{t_i} a_{i t_i} \max \left\{1, \frac{a_{ij}}{b - \sum_{k \in M_C - {i, i'}} a_{k t_k} - a_{i' n_{i'}}}\right\} x_{ij} + \sum_{j=t_i+1}^{n_i} a_{ij} x_{ij} \right) \leq b
\end{split}
\end{equation}
is valid for $PS$. Inequality~\eqref{facet_de_2} is facet-defining when $t_i = n_i$, $\forall i \in M_C - i'$.
\end{theorem}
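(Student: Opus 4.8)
The plan is to get validity by exhibiting \eqref{facet_de_2} as a sequential lifting of the cover inequality $\sum_{ij \in C} a_{ij} x_{ij} \le b$, and to get the facet claim by the standard argument that every valid inequality tight on the face it defines is a nonnegative multiple of it. Set $\alpha := b - \sum_{k \in M_C - i'} a_{k t_k}$ and, for $i \in M_C - i'$, $\beta_i := b - \sum_{k \in M_C - \{i,i'\}} a_{k t_k} - a_{i' n_{i'}} = \alpha + a_{i t_i} - a_{i' n_{i'}}$. Since $C$ is a cover we have $a_{i'j'} > \alpha$; the hypothesis $\sum_{k \in M_C - i'} a_{k t_k} + a_{i' n_{i'}} < b$ says $a_{i' n_{i'}} < \alpha$; and with $a_{ij} \ge 0$ these give $\beta_i \ge a_{i t_i} \ge 0$, in fact $\beta_i > 0$, so the quotients in \eqref{facet_de_2} make sense; together with \cref{asmp: 4} they also give $a_{i'1} \ge a_{i'j'} > \alpha > a_{i' n_{i'}}$, which gets used repeatedly. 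Write $\gamma$ for the coefficient vector of \eqref{facet_de_2}, so $\gamma_{ij} = 0$ for $i \notin M_C$.

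For validity, the cover inequality is valid on the face $\{x \in PS : x_{ij} = 0 \text{ for } ij \notin C\}$ since there it is just the knapsack constraint. I would lift the remaining variables in three stages. Stage 1: lift each $x_{i'j}$, $j \in N_{i'} \setminus \{j'\}$; complementarity kills every other group-$i'$ variable whenever $x_{i'j} = z > 0$, so the residual budget $b - a_{i'j}z$ is shared among the variables $x_{k t_k}$, $k \in M_C - i'$, of total weight $b - \alpha$, and the lifting coefficient $\min_{z \in (0,1]} (b - g(z))/z$ — with $g(z)$ the maximum of the current left-hand side over $x \in PS$ having $x_{i'j} = z$ — equals $\max\{a_{i'j}, \alpha\}$, the minimum occurring at $z = 1$. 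Stage 2: lift the groups $i \in M_C - i'$ one at a time; when $x_{ij} = z > 0$ every other group-$i$ variable vanishes, so in $g(z)$ the maximizer switches group $i'$ onto its lightest variable $x_{i' n_{i'}}$ (the most weight-efficient feasible choice) and fills the other groups $k \in M_C - \{i,i'\}$ to $a_{k t_k}$, which is exactly what makes $\beta_i$ the residual budget for group $i$ and produces the coefficients $a_{i t_i} \max\{1, a_{ij}/\beta_i\}$ for $j \le t_i$ and $a_{ij}$ for $j > t_i$. Stage 3: lift the $x_{ij}$ with $i \notin M_C$; turning such a variable up to a small value in the base point $\bar z$ below keeps it feasible and tight, so the coefficient is $0$. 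Validity of \eqref{facet_de_2} follows from the lifting theory.

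For the facet claim, assume $t_i = n_i$ for all $i \in M_C - i'$ and put $F := \{x \in PS : \gamma^\top x = b\}$. Let $\bar z$ have $\bar z_{i' n_{i'}} = 1$, $\bar z_{k n_k} = 1$ for all $k \in M_C - i'$, and all other entries $0$; one checks $\bar z \in S$ and $\gamma^\top \bar z = b$, so $F$ is a nonempty face, and $0 \notin F$ shows it is proper. It suffices to show that every valid inequality $\pi^\top x \le \pi_0$ with $F \subseteq \{\pi^\top x = \pi_0\}$ has $\pi = \lambda \gamma$ for some scalar $\lambda$, since then $\pi_0 = \pi^\top \bar z = \lambda b$ and $(\gamma, b) \ne 0$ forces $\dim F = d - 1$. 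I would do this with points of $F$ differing from $\bar z$ (or from each other) in at most two coordinates. For group $i'$: for $j$ with $a_{i'j} \ge \alpha$, the point agreeing with $\bar z$ except $x_{i' n_{i'}} = 0$, $x_{i'j} = \alpha / a_{i'j}$ lies in $F$; for $j$ with $a_{i'j} < \alpha$, replacing $\bar z_{i' n_{i'}} = 1$ by $x_{i'j} = 1$ lies in $F$; these force $\pi_{i'j} = \lambda \max\{a_{i'j}, \alpha\}$ with $\lambda := \pi_{i' n_{i'}}/\alpha$. For a group $k_0 \in M_C - i'$: lowering $\bar z_{k_0 n_{k_0}}$ slightly while switching group $i'$ onto $x_{i'1}$ with a compensating value keeps us in $F$ (using $a_{i'1} > \alpha$) and gives $\pi_{k_0 n_{k_0}} = \lambda a_{k_0 n_{k_0}}$, and for $j < n_{k_0}$ the point obtained from $\bar z$ by switching group $k_0$ onto $x_{k_0 j}$ at value $\min\{1, \beta_{k_0}/a_{k_0 j}\}$ and group $i'$ onto $x_{i' n_{i'}}$ lies in $F$ — feasibility and tightness here use exactly $a_{i' n_{i'}} < \alpha$ — and gives $\pi_{k_0 j} = \lambda a_{k_0 n_{k_0}} \max\{1, a_{k_0 j}/\beta_{k_0}\}$. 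Finally, for $i \notin M_C$ and any $j \in N_i$, raising $x_{ij}$ in $\bar z$ to $\min\{1, (\alpha - a_{i' n_{i'}})/a_{ij}\}$ (or to $1$ if $a_{ij} = 0$) stays in $S$ by the slack $\alpha - a_{i' n_{i'}} > 0$ of $\bar z$ and keeps $\gamma^\top x = b$ since $\gamma_{ij} = 0$, so $\pi_{ij} = 0$. Hence $\pi = \lambda \gamma$, as wanted.

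The step I expect to be the main obstacle is the Stage-2 lifting computation: one must verify, for each slice $x_{ij} = z$, that the claimed assignment of the remaining variables (lightest variable on $i'$, full weights on $M_C - \{i,i'\}$) actually maximizes the current left-hand side, and that the minimum of $(b - g(z))/z$ over $z \in (0,1]$ is attained where it produces $a_{i t_i}\max\{1, a_{ij}/\beta_i\}$; the concavity of $g$, the competing ``heavy variable on $i'$'' assignment, and whether $z = \beta_i/a_{ij}$ falls inside $(0,1]$ all have to be weighed, and this comparison is precisely what pins down the factor $\max\{1, a_{ij}/\beta_i\}$. The facet side has no conceptual difficulty, only the routine check that each constructed point obeys the knapsack and complementarity constraints and is tight, which repeatedly invokes $a_{i' n_{i'}} < \alpha < a_{i'j'}$ and $a_{i t_i} \le \beta_i$.
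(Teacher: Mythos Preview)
The paper does not prove this statement: \cref{theo: theo2_de2002} is quoted from \cite{de2014branch} as background material, and \cref{sec: recap} explicitly says that ``known results are not proved, but referenced.'' So there is no in-paper proof to compare against. What the paper does say is that inequalities \eqref{facet_de_1} and \eqref{facet_de_2} ``are obtained by sequentially lifting the cover inequalities,'' which is exactly the strategy you have outlined; in that sense your plan matches the intended derivation.

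On the substance of your sketch: the Stage~1 computation and the facet argument are fine, and your identification $\alpha=b-\sum_{k\in M_C-i'}a_{kt_k}$, $\beta_i=\alpha+a_{it_i}-a_{i'n_{i'}}$ together with $a_{i'n_{i'}}<\alpha<a_{i'j'}$ is correct and does the bookkeeping. The one place to be careful is your Stage~2, as you yourself anticipate. When you lift the variables $x_{ij}$ for $i\in M_C-i'$ \emph{sequentially}, at the moment you lift group $i$ the previously lifted groups $i''\in M_C-\{i,i'\}$ already carry their new coefficients, not just $a_{i''t_{i''}}$; you must check that allowing these newly lifted variables in the maximization of $g(z)$ does not beat the assignment you describe (put weight on $x_{i'n_{i'}}$ and saturate $x_{k t_k}$ for $k\in M_C-\{i,i'\}$). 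This turns out to be harmless---for $j''\le t_{i''}$ the lifted coefficient $a_{i''t_{i''}}\max\{1,a_{i''j''}/\beta_{i''}\}$ is at most $a_{i''j''}$ whenever $a_{i''j''}\ge\beta_{i''}$, and equals $a_{i''t_{i''}}$ otherwise, so switching group $i''$ off $x_{i''t_{i''}}$ never increases the ratio of inequality-coefficient to knapsack-weight---but you should say so explicitly rather than silently assume the other groups are still in their ``cover'' state. With that observation in place, your lifting argument goes through and reproduces \eqref{facet_de_2}.
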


%
%
%

\section{Separation complexity of lifted cover inequalities for CKP} 
\label{sec: complexity}

A fundamental component of cutting plane algorithms is separation---does there exist a cutting plane within a given family that is violated by a given infeasible solution?
For many classic classes of inequalities for integer programming, separation is known to be \NP-complete. 
In knapsack problems, \cite{klabjan1998complexity} showed that the separation for cover inequalities is \NP-complete, and \cite{MR1688131} showed the same result for lifted cover inequalities. 
Moreover, recently Del Pia et al.~\cite{MR4485516} further extended the  complexity result for $\{0,1\}$-knapsack problems to extended cover inequalities, $(1,k)$-configuration inequalities, and weight inequalities.
In \cite{de2014branch}, de Farias et al.~also conjectured that the separation problems for both \eqref{facet_de_1} and \eqref{facet_de_2} are \NP-complete. 
In this section, we show that this conjecture is true.

First, we formally define the separation problems for \eqref{facet_de_1} and \eqref{facet_de_2}. Here the LP relaxation of \eqref{CKP} is simply the optimization problem without the complementarity constraints~\eqref{constraint: complementarity}.

\bigskip
\noindent \textbf{Problem SP1}\\
\textbf{Input: }$(c,a,b) \in (\Z^d_+, \Z^d_+, \Z_+)$ and an optimal solution $x^*$ to the LP relaxation of~\eqref{CKP}.\\
\textbf{Question: }Is there a cover $C = \{i r_i \mid  i \in M_C\}$ of \eqref{CKP}, such that 
\begin{equation}
\label{q1}
\sum_{i \in M_C} \sum_{j=1}^{r_i - 1}a_{i r_i} x^*_{ij} + \sum_{i \in M_C} \sum_{j=r_i}^{n_i} \max \left\{a_{ij}, b - \sum_{k \in M_C - i} a_{k r_k} \right\} x^*_{ij} > b?
\end{equation}

\bigskip
\noindent \textbf{Problem SP2}\\
\textbf{Input: }$(c,a,b) \in (\Z^d_+, \Z^d_+, \Z_+)$ and an optimal solution $x^*$ to the LP relaxation of~\eqref{CKP}.\\
\textbf{Question: }Is there a cover $C = \{i t_i \mid i \in M_C - i'\} \cup \{i'j'\}$ of \eqref{CKP}, with $\sum_{i \in M_C - i'} a_{i t_i} + a_{i' n_{i'}} < b$, such that 
\begin{equation}
\label{q2}
\begin{split}
& \sum_{j \in N_{i'}}  \max \left\{a_{i'j}, b - \sum_{k \in M_C - i'} a_{k t_k} \right\} x_{i'j}  \\
& + \sum_{i \in M_C - i'} \left(\sum_{j =1}^{t_i} a_{i t_i} \max \left\{1, \frac{a_{ij}}{b - \sum_{k \in M_C - {i, i'}} a_{k t_k} - a_{i' n_{i'}}}\right\} x_{ij} + \sum_{j=t_i+1}^{n_i} a_{ij} x_{ij} \right) > b?
\end{split}
\end{equation}

Next, we show that Problem SP1 is \NP-complete.
The reduction is from the \emph{partition problem:}  Given $(\alpha_1, \ldots, \alpha_k; \beta) \in (\Z^k_+, \Z_+)$ with $\sum_{i=1}^k \alpha_i = 2\beta$, does there exist a subset $S \subseteq [k]$ such that $\sum_{i \in S}\alpha_i = \beta$?
The partition problem is one of the original 21 problems that Karp demonstrated to be \NP-complete \cite{karp:72}.

\begin{theorem}
\label{theo: complexity1}
Problem SP1 is \NP-complete.
\end{theorem}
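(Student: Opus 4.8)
The plan is to show membership in $\mathcal{NP}$ and then give a polynomial reduction from the partition problem. Membership is easy: given a candidate cover $C = \{ir_i \mid i \in M_C\}$, one can verify in polynomial time both that $\sum_{ij \in C} a_{ij} > b$ (so $C$ is indeed a cover) and that the left-hand side of \eqref{q1} exceeds $b$, since evaluating the sum only requires computing $O(d)$ coefficients of the form $\max\{a_{ij}, b - \sum_{k \in M_C - i} a_{kr_k}\}$. So $C$ itself is a polynomial-size certificate, and the remaining work is the hardness reduction.

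For hardness, I would start from an instance $(\alpha_1, \ldots, \alpha_k; \beta)$ of partition with $\sum_{i=1}^k \alpha_i = 2\beta$ and build a CKP instance together with an LP-optimal solution $x^*$ so that a violated inequality \eqref{q1} exists if and only if some $S \subseteq [k]$ has $\sum_{i \in S}\alpha_i = \beta$. The natural construction is to create one complementarity block $N_i$ (with $n_i = 2$, so that $r_i \in \{1,2\}$) for each $i \in [k]$, using the first-element weight $a_{i1}$ to encode $\alpha_i$ and the second element as a "switch" that lets the cover either include block $i$ at full weight or essentially drop it; the choice of $r_i$ (equivalently, whether $i \in M_C$ and which element is picked) then corresponds to the choice of whether $i \in S$. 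One would choose $b$ and a small number of additional "padding" blocks so that $\sum_{i \in M_C} a_{ir_i} > b$ forces $M_C$ to be exactly a partition half, and so that the $\max\{\cdot, b - \sum_{k \in M_C - i} a_{kr_k}\}$ terms — which are governed by the "slack" $b - \sum_{k \neq i} a_{kr_k}$ — collapse to something tractable precisely on the knife-edge $\sum_{i \in S}\alpha_i = \beta$. The LP solution $x^*$ should be chosen (e.g. uniform small values on the relevant variables, zero elsewhere) so that the violation condition \eqref{q1} reduces to a clean arithmetic inequality equivalent to the balanced partition.

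The delicate point — and what I expect to be the main obstacle — is controlling the nonlinear $\max$ terms in \eqref{q1}. Unlike a plain cover inequality, the lifted coefficients depend on $\sum_{k \in M_C - i} a_{kr_k}$, which varies with the cover; so the violation value is not simply additive over blocks, and one must design the weights $a_{ij}$ and the threshold $b$ so that: (i) for the "wrong" choices of $M_C$ the slack $b - \sum_{k \in M_C - i}a_{kr_k}$ is negative or so large that the lifted coefficient is just $a_{ij}$, making \eqref{q1} non-violated; and (ii) exactly at a balanced partition the coefficients take the intended larger value and the inequality is violated. Getting a gadget where this dichotomy is exact, while keeping all data in $\Z_+$ and polynomially bounded, and while making $x^*$ genuinely LP-optimal for the constructed objective $c$, is the technical crux. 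A workable route is to use big-$M$-style padding blocks (with no complementarity, i.e. indices in $M_0$) to absorb most of $b$, leaving only a thin "window" of width on the order of $\max_i \alpha_i$ in which the partition arithmetic plays out; then the $\max$ terms are forced into one of two regimes according to whether the selected $\alpha_i$'s sum to at most $\beta$, and one tunes $x^*$ and $c$ so that a violation occurs iff equality holds. Once the gadget is fixed, verifying the "if and only if" is a routine case analysis on $M_C$ and the $r_i$.
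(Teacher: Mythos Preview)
Your high-level plan (membership in $\mathcal{NP}$ plus a reduction from partition) matches the paper, and your membership argument is essentially identical to theirs. But the hardness construction you sketch is both different from the paper's and not carried through, and the obstacle you flag as ``the technical crux'' is one the paper simply avoids.

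Concretely, the paper does \emph{not} give each partition element its own two-element block. Instead it sets $n_1=\cdots=n_k=1$ (so all $k$ partition items live in $M_0$) and creates a \emph{single} complementarity block $k+1$ with $n_{k+1}=\beta+1$, weights $a_{k+1,1}=3$ and $a_{k+1,2}=\cdots=a_{k+1,\beta+1}=1$, and $b=\beta+2$. The point is that the hypothesis of Theorem~\ref{theo: theo1_de2002} requires some $i'\in M_C$ and $j'>r_{i'}$ with $\sum_{i\in M_C-i'}a_{ir_i}+a_{i'j'}<b$; since block $k+1$ is the only one with more than one element, this forces $i'=k+1$ and $(k+1,1)\in C$, yielding $\sum_{i\in M_C\cap[k]}\alpha_i+1<b$. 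Combined with the cover condition $\sum_{i\in M_C\cap[k]}\alpha_i+3>b$ and integrality, this pins down $\sum_{i\in M_C\cap[k]}\alpha_i=\beta$ exactly. So the lever is the \emph{side assumption} of Theorem~\ref{theo: theo1_de2002}, not the $\max$ coefficients; once $\sum\alpha_i=\beta$ is forced, the $\max$ terms all equal $2$ on the tail of block $k+1$, and a straightforward choice of $x^*$ (fractional on all variables, summing to $b$) makes the violation arithmetic a one-line check.

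Your proposed gadget---one two-element block per $\alpha_i$, with big-$M$ padding and a case analysis on the slack $b-\sum_{k\ne i}a_{kr_k}$---may be workable, but you have not actually built it, and the ``knife-edge'' behaviour you need from the $\max$ terms is precisely what makes that route delicate. The paper's construction sidesteps this entirely by collapsing all the freedom into one block and letting the theorem's own hypothesis do the squeezing. If you want to complete your version, you would still need to exhibit concrete $a_{ij}$, $b$, $c$, and $x^*$ and verify both directions of the equivalence; as it stands the proposal is a plan rather than a proof.
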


\begin{proof}
Since verifying if a given point violates a given inequality can be done in polynomial time with respect to the input size of the point and the inequality, the separation problem SP1 is clearly in the class \NP. 
Therefore, we only have to show that SP1 is \NP-hard. 

Given an instance $(\alpha_1, \ldots, \alpha_k; \beta) \in (\Z^k_+, \Z_+)$ of the partition problem, we construct the following instance of \eqref{CKP}, where the data $(a,b,c, x^*)$ is defined as follows: 
\begin{align}
\label{eq: pp_reduction_1}
\begin{split}
& M := [k+1], \ n_1 = \ldots = n_k := 1, \ n_{k+1} := \beta+1,\\
& a_{i1} := \alpha_i \ \forall i \in [k], \ a_{k+1,1} := 3, \ a_{k+1,2} = \ldots = a_{k+1,\beta+1} := 1,\\
& b := \beta + 2, \ c := a,\\
& x^*_{11} = \ldots = x^*_{k1} := \frac{2\beta-3}{6 \beta}, \ x^*_{k+1,1} := 1, \ x^*_{k+1,2} = \ldots =  x^*_{k+1,\beta+1} := \frac{1}{3}.
\end{split}
\end{align} 
Since $\sum_{i=1}^n \alpha_i = 2\beta$, here $\sum_{i \in M}\sum_{j \in N_i} a_{ij} x^*_{ij} = b$ and $c = a$, so $x^*$ is an optimal solution to the LP relaxation of our constructed CKP instance. 
Hence, $(a,b,c,x^*)$ is a correct input to problem SP1, and the encoding size of $(a,b,c,x^*)$ is polynomial in the encoding size of $(\alpha_1, \ldots, \alpha_k; \beta)$. 

First, assume $S \subseteq [k]$ is a yes-certificate to the above partition problem, with $\sum_{i \in S}\alpha_i = \beta$. 
Then, we consider $C = \{i 1 \mid i \in S \cup \{k+1\}\}$. 
We have $\sum_{ij \in C}a_{ij} = \sum_{i \in S} \alpha_i + 3 = \beta + 3 = b+1$, so $C$ is a cover. Now we verify \eqref{q1}. 
By plugging $C$ and $x^*$ into the left-hand side of \eqref{q1}, we obtain: $\sum_{i \in S} \alpha_i \cdot \frac{2\beta-3}{6 \beta} + (3 \cdot 1 + \sum_{j=2}^{\beta+1} 2 \cdot \frac{1}{3}) = \beta + \frac{5}{2}$, which is larger than $b$. Hence, from a yes-certificate to the partition problem, we can obtain a yes-certificate to problem SP1 with the above constructed input data. 

Second, we assume $C$ is a cover to our constructed CKP, such that the corresponding inequality \eqref{facet_de_1} is not satisfied by $x^*$. 
The assumption in Theorem~\ref{theo: theo1_de2002} implies that there exists some $i' \in M_C$ and $j'$ such that $\sum_{i \in M_C-i'} a_{i r_i} + a_{i'j'} < b$, since otherwise inequality \eqref{facet_de_1} is dominated by the original knapsack constraint~\eqref{constraint: knapsack}.
In our constructed CKP instance, we know that $i'$ must be $k+1$ and $(k+1,1) \in C$. In other words, $\sum_{i \in M_C \cap [k]} \alpha_i + 1 < b$. Since $C$ is a cover, we also have $\sum_{i \in M_C \cap [k]} \alpha_i + 3 > b$. Here $\alpha_i \in \Z_+$ for all $i \in [k]$. Therefore, $\sum_{i \in M_C \cap [k]} = b-2 = \beta$, which means $M_C \cap [k]$ is a yes-certificate for the above partition problem. 

We have shown that there is a yes-certificate to SP1 with input $(a,b,c,x^*)$ in \eqref{eq: pp_reduction_1} if and only if there is a yes-certificate to the partition problem with input $(\alpha_1, \ldots, \alpha_k; \beta)$. 
Since the partition problem is \NP-hard, we obtain that SP1 is \NP-hard as well.
\end{proof}

Next, we consider Problem SP2.
The proof of the next theorem is almost identical to the proof of Theorem~\ref{theo: complexity1}.
In particular, it is obtained once again with a reduction from the partition problem with the data in \eqref{eq: pp_reduction_1}.
For completeness, we give a detailed proof.

\begin{theorem}
\label{theo: complexity2}
Problem SP2 is \NP-complete.
\end{theorem}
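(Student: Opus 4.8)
The plan is to reuse, essentially verbatim, the reduction from the \textbf{partition problem} employed in the proof of Theorem~\ref{theo: complexity1}, applied to the very same instance of \eqref{CKP} given in \eqref{eq: pp_reduction_1}. Membership in \NP{} is immediate: a candidate certificate for SP2 consists of a cover $C$, the distinguished index $i'$, the pair $i'j'$, and the values $t_i$ for $i \in M_C - i'$; one verifies in polynomial time that $C$ is a cover, that $\sum_{i \in M_C - i'} a_{i t_i} + a_{i' n_{i'}} < b$, and that the left-hand side of \eqref{q2} evaluated at $x^*$ exceeds $b$. So the real work is to establish \NP-hardness by matching up yes-certificates of the two problems.

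For the ``if'' direction, given a subset $S \subseteq [k]$ with $\sum_{i \in S}\alpha_i = \beta$, I would take $C := \{i1 \mid i \in S\} \cup \{(k+1,1)\}$, distinguished index $i' := k+1$, distinguished pair $(k+1,1)$ (note $j' = 1 < \beta+1 = n_{k+1}$), and $t_i := 1$ for every $i \in S = M_C - i'$. Then $\sum_{ij\in C} a_{ij} = \beta + 3 > b$, so $C$ is a cover, and $\sum_{i \in M_C - i'} a_{i t_i} + a_{i' n_{i'}} = \beta + 1 < b$, so the certificate is admissible for SP2. It remains to evaluate the left-hand side of \eqref{q2} at $x^*$. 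Since $b - \sum_{k \in M_C - i'} a_{k t_k} = 2$, the first sum equals $3\,x^*_{k+1,1} + \sum_{j=2}^{\beta+1} 2\,x^*_{k+1,j} = 3 + \tfrac{2\beta}{3}$; and for each $i \in S$ the denominator $b - \sum_{k \in M_C - \{i,i'\}} a_{k t_k} - a_{i' n_{i'}}$ equals $\alpha_i + 1 > \alpha_i = a_{i1}$, so every inner $\max$ collapses to $1$ and the second sum equals $\sum_{i \in S}\alpha_i\, x^*_{i1} = \tfrac{2\beta - 3}{6}$. Adding up gives $\beta + \tfrac52 > \beta + 2 = b$, so \eqref{q2} is satisfied and $C$ is a yes-certificate to SP2.

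For the ``only if'' direction, suppose some cover $C$ (with data $i', j', \{t_i\}$) is a yes-certificate to SP2. Because the distinguished pair satisfies $j' < n_{i'}$, we need $n_{i'} \geq 2$, and in the instance \eqref{eq: pp_reduction_1} the only group with more than one element is $k+1$, so $i' = k+1$. Hence $M_C - i' \subseteq [k]$ and $t_i = 1$ for every such $i$, and the admissibility condition reads $\sum_{i \in M_C \cap [k]}\alpha_i + a_{k+1,\beta+1} < b$, i.e.\ $\sum_{i \in M_C \cap [k]}\alpha_i \leq \beta$. On the other hand, $C$ being a cover gives $\sum_{i \in M_C \cap [k]}\alpha_i + a_{k+1,j'} > b$; if $j' \geq 2$ then $a_{k+1,j'} = 1$ and we would get $\sum_{i \in M_C \cap [k]}\alpha_i > \beta + 1$, a contradiction, so $j' = 1$, $a_{k+1,1} = 3$, and $\sum_{i \in M_C \cap [k]}\alpha_i > \beta - 1$, forcing $\sum_{i \in M_C \cap [k]}\alpha_i = \beta$. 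Thus $M_C \cap [k]$ solves the partition instance, and since partition is \NP-hard, SP2 is \NP-hard, hence \NP-complete.

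The only part that is not a pure transcription of the proof of Theorem~\ref{theo: complexity1} is the arithmetic in the ``if'' direction: one must check that every denominator appearing inside the nested $\max$ of \eqref{q2} is strictly positive (here each equals $\alpha_i + 1$) and that the two levels of $\max$ simplify to the claimed values, so that the left-hand side really evaluates to $\beta + \tfrac52$. This is the step to carry out carefully; the rest follows the template of Theorem~\ref{theo: complexity1} line for line.
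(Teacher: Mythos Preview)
Your proposal is correct and follows essentially the same reduction as the paper's own proof, using the identical instance \eqref{eq: pp_reduction_1} and the same certificate $C = \{i1 \mid i \in S\}\cup\{(k+1,1)\}$ with $i'=k+1$. If anything, you are more careful than the paper: you explicitly justify why $i'=k+1$ (via $j'<n_{i'}$) and why $j'=1$ (ruling out $j'\ge 2$ by the cover inequality), and you verify the nested $\max$ denominators equal $\alpha_i+1$, all of which the paper glosses over.
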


\begin{proof} 
As in the proof of Theorem~\ref{theo: complexity1}, it suffices to prove the \NP-hardness of SP2.
Given an instance $(\alpha_1, \ldots, \alpha_k; \beta)$ of the partition problem, we construct the instance of \eqref{CKP} with data $(a,b,c, x^*)$ according to \eqref{eq: pp_reduction_1}. 
As we have shown in the proof for Theorem~\ref{theo: complexity1}, $(a,b,c, x^*)$ is a correct input to SP2 with polynomial encoding size with respect to that of $(\alpha_1, \ldots, \alpha_k; \beta)$. 

First, assume $S \subseteq [k]$ is a yes-certificate to the above partition problem, with $\sum_{i \in S}\alpha_i = \beta$. 
Then, we consider $C = \{i 1 \mid i \in S \cup \{k+1\}\}$, and pick $(i',j') = (k+1,1)$. Here $\sum_{ij \in C}a_{ij} = \sum_{i \in S} \alpha_i + 3 = \beta + 3 = b+1$, so $C$ is a cover. 
Also, $\sum_{i \in M_C-\{k+1\}} a_{i1} + a_{k+1, \beta+1} = \sum_{i \in S} \alpha_i + 1 = \beta + 1 < b$.
Now we verify \eqref{q2}. By plugging $C$ and $x^*$ into the left-hand side of \eqref{q2}, we obtain: $(3 \cdot 1 + \sum_{j=2}^{\beta+1} 2 \cdot \frac{1}{3}) + \sum_{i \in S} \alpha_i \cdot \frac{2\beta-3}{6 \beta} = \beta + \frac{5}{2}$, which is larger than $b$. 
Hence, from a yes-certificate to the partition problem, we can obtain a yes-certificate to problem SP2 with input data $(a,b,c, x^*)$ constructed in \eqref{eq: pp_reduction_1}.

Second, we assume $C$ is a cover to our constructed CKP and $i'j' \in C$, such that 
$\sum_{i \in M_C - i'} a_{i t_i} + a_{i' n_{i'}} < b$,
and the corresponding inequality \eqref{facet_de_2} is not satisfied by $x^*$. In our constructed CKP instance, we know that $(i',j')$ must be $(k+1,1)$. In other words, $\sum_{i \in M_C \cap [k]} \alpha_i + 1 < b$. Since $C$ is a cover, we also have $\sum_{i \in M_C \cap [k]} \alpha_i + 3 > b$. Here $\alpha_i \in \Z_+$ for all $i \in [k]$. Therefore, $\sum_{i \in M_C \cap [k]} = b-2 = \beta$, which means $M_C \cap [k]$ is a yes-certificate for the above partition problem. 

We have shown that there is a yes-certificate to SP2 with input $(a,b,c,x^*)$ in \eqref{eq: pp_reduction_1} if and only if there is a yes-certificate to the partition problem with input $(\alpha_1, \ldots, \alpha_k; \beta)$. 
Since the partition problem is \NP-hard, we obtain that SP2 is \NP-hard as well.
\end{proof}

\section{New families of facet-defining inequalities}
\label{sec: cuts}

In this section, we propose three new families of inequalities valid for $PS$, that are fundamentally different from \eqref{facet_de_1} and \eqref{facet_de_2}. We also give sufficient conditions for the inequalities to be facet-defining.  The inequalities are derived using the concept of a \emph{pack}, a concept that is complementary to a cover, and we call the inequalities \emph{complementarity pack inequalities}.
We refer the reader to \cite{atamturk2005cover} for discussions on how packs are used to obtain strong valid inequalities in $0-1$ programming.

\begin{definition}
Let $P = \{i_1j_1, \ldots, i_k j_k\} \subset I, $ where $i_1, \ldots, i_k$ are all distinct. The set $P$ is called a \emph{pack} if $\sum_{ij \in P} a_{ij} < b$. 
A pack $P$ is further called a \emph{maximal switching pack}, if $j = n_i$ for any $ij \in P$, and $\sum_{ij \in P} a_{ij} - a_{i' n_{i'}} + a_{i', n_{i'} - 1} > b$, for any $i' \in M_P - M_0.$
\end{definition}


\subsection{First complementarity pack inequalities}

Now we present the first family of complementarity pack inequalities.
\begin{theorem}
\label{theo: comp_ineq_1}
Let $P$ be a pack for $PS$, and $s := \sum_{ij \in P} a_{ij}$. 
Then
\begin{equation}
\label{ieq: 2}
\begin{split}
\sum_{i \in M_P} \sum_{j \in N_i} a_{ij} x_{ij} + \sum_{\substack{ij \in P,\\ i \in M_P - M_0}} (b-s) x_{ij} \leq b + (|M_P - M_0| - 1) (b-s)
\end{split}
\end{equation}
is a valid inequality for $PS$. 
Furthermore, when $P$ is a maximal switching pack, we have:
\begin{enumerate}
\item \eqref{ieq: 2} induces a face of $PS$ of dimension at least $d - |M_P - M_0|$;
\item When $M_P \cap M_0 \neq \emptyset$, \eqref{ieq: 2} is facet-defining.
\end{enumerate}
\end{theorem}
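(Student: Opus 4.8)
The plan is to verify \eqref{ieq: 2} on $S$ (which suffices since $PS=\conv(S)$) by a short case split, and then to prove the dimension statements by exhibiting explicit families of affinely independent feasible points lying on the induced face $F$. Throughout set $A:=M_P-M_0$, $k_1:=|A|$, write $ij_i$ for the element of $P$ in row $i\in M_P$, and $v_i(x):=\sum_{j\in N_i}a_{ij}x_{ij}$; note first that the right-hand side of \eqref{ieq: 2} equals $s+k_1(b-s)$. For validity, fix $x\in S$; dropping the nonnegative terms outside $M_P$ from the knapsack inequality gives $\sum_{i\in M_P}v_i(x)\le b$. Let $\bar A:=\{i\in A:x_{ij_i}>0\}$. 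If $\bar A=A$, complementarity forces $v_i(x)=a_{ij_i}x_{ij_i}\le a_{ij_i}$ for every $i\in A$, so $\sum_{i\in M_P}v_i(x)\le\sum_{ij\in P}a_{ij}=s$; together with $\sum_{i\in A}x_{ij_i}\le k_1$ this bounds the left-hand side of \eqref{ieq: 2} by $s+k_1(b-s)$. If $\bar A\subsetneq A$, then $\sum_{i\in A}x_{ij_i}=\sum_{i\in\bar A}x_{ij_i}\le k_1-1$, which with $\sum_{i\in M_P}v_i(x)\le b$ bounds the left-hand side by $b+(k_1-1)(b-s)$. Either way \eqref{ieq: 2} holds; the maximal switching hypothesis is not needed here.

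Now assume $P$ is a maximal switching pack, so $j_i=n_i$ for $i\in A$ and $a_{i,n_i-1}-a_{in_i}>b-s$ for each $i\in A$; assume also $A\neq\emptyset$ (otherwise \eqref{ieq: 2} is a nonnegative combination of the bounds $x_{ij}\le1$) and, following the reduction after Assumption~\ref{asmp: 3}, that $a_{ij}>0$, whence $a_{i,n_i-1}>b-s>0$. Let $e_{ij}$ be the unit vector in coordinate $ij$. The base point $x^0$ has $x^0_{in_i}=1$ for $i\in A$, $x^0_{i1}=1$ for $i\in M_P\cap M_0$, and all other coordinates $0$; it is feasible (weight $s<b$) and tight for \eqref{ieq: 2}. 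I add three families of points of $S\cap F$: \emph{(a)} for $i\in M\setminus M_P$, $j\in N_i$, the point $x^0+\epsilon\,e_{ij}$ with $\epsilon>0$ small, which leaves the left-hand side of \eqref{ieq: 2} unchanged; \emph{(b)} for $i\in A$, $1\le j\le n_i-1$, the point obtained from $x^0$ by setting $x_{in_i}:=0$ and $x_{ij}:=(a_{in_i}+b-s)/a_{ij}\in(0,1)$, whose weight is exactly $b$ and for which the coefficient $a_{in_i}+b-s$ of $x_{in_i}$ in \eqref{ieq: 2} exactly compensates $a_{ij}$ times the new value, so \eqref{ieq: 2} stays tight; and \emph{(c)} for each $i^{*}\in M_P\cap M_0$, fixing some $i_0\in A$ once and for all, the point obtained from the family-(b) point for $(i_0,n_{i_0}-1)$ by decreasing $x_{i^{*}1}$ by a small $\delta>0$ and increasing $x_{i_0,n_{i_0}-1}$ by $(a_{i^{*}1}/a_{i_0,n_{i_0}-1})\delta$, which keeps the weight at $b$ and \eqref{ieq: 2} tight. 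These are $1+\sum_{i\notin M_P}n_i+\sum_{i\in A}(n_i-1)+|M_P\cap M_0|=d-k_1+1$ points; inspecting their difference vectors from $x^0$ shows affine independence: family (a) spans the coordinate blocks outside $M_P$; the family-(b) vectors of a fixed row span, inside that row's block, the hyperplane cut out by the coefficients of \eqref{ieq: 2}, and distinct rows use disjoint blocks; and each family-(c) vector is the unique one with a nonzero $e_{i^{*}1}$-entry. Hence $\dim F\ge d-k_1$, which is item~1.

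For item~2, assume moreover $M_P\cap M_0\neq\emptyset$ and fix $i^{*}\in M_P\cap M_0$. I adjoin $k_1-1$ further points of $S\cap F$: for each $i_1\in A\setminus\{i_0\}$, take the family-(b) point for $(i_1,n_{i_1}-1)$ and perturb it as in (c), now trading weight between $x_{i^{*}1}$ and $x_{i_1,n_{i_1}-1}$. The key point is that each such point is independent of all the earlier ones: its difference vector from $x^0$ has, within the block of row $i_1$, a component \emph{outside} the subspace spanned by the family-(b) vectors of that row — equivalently, it violates the linear equation that \eqref{ieq: 2} induces on block $i_1$ by a nonzero amount $a_{i_1,n_{i_1}-1}\delta'$ — whereas every previously listed difference vector respects that equation on block $i_1$; the same comparison across distinct $i_1$ shows the new points are mutually independent. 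This produces $d$ affinely independent points of $S\cap F$, so $\dim F=d-1$ and \eqref{ieq: 2} is facet-defining.

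The only genuinely delicate part, and the step I would expect to take the most care, is the affine-independence bookkeeping — in particular confirming that the $k_1-1$ extra points in the facet case really enlarge the affine hull. The mechanism is that each such point balances a perturbation of the $M_0$-row variable $x_{i^{*}1}$ against one of the $A$-row variable $x_{i_1,n_{i_1}-1}$: neither perturbation alone stays on $F$ or respects the knapsack, but their combination does, and this is precisely what forces the coefficients of the variables $x_{in_i}$ ($i\in A$) in any equation valid on $F$ to be proportional across rows, the content of facetness. The per-point feasibility and tightness checks are routine once the switching inequality is in hand.
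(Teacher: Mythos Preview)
Your validity argument (the case split on whether $\bar P\subseteq\supp(x)$) and your family-(a) and family-(b) points coincide with the paper's. Where you diverge is in establishing the facet in item~2. The paper treats the cases $M_P\cap M_0=\emptyset$ and $M_P\cap M_0\neq\emptyset$ separately: in the second it builds, for each $i'\in A$, a parametrized slice of tight points in which the $M_0$-variables and the $x_{i'j}$ ($j<n_{i'}$) vary subject to the knapsack equation, counts $|M_P\cap M_0|+n_{i'}-1$ affinely independent points per slice, and then argues independence across slices indirectly by observing that all these points live in an ambient affine subspace of dimension $\sum_{i\in M_P}n_i-2$. You instead prove item~1 uniformly (families (a)--(c)) and then top up with exactly $k_1-1$ explicit perturbations, checking independence directly via the block-$i_1$ coefficient functional. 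Your argument is more hands-on and avoids the overcount-then-trim step; the paper's is more structural but less explicit about which points are actually being kept. One small slip: under the scaling you fixed in family~(c) (decrease $x_{i^*1}$ by $\delta'$, increase $x_{i_1,n_{i_1}-1}$ by $(a_{i^*1}/a_{i_1,n_{i_1}-1})\delta'$), the block-$i_1$ residual works out to $a_{i^*1}\delta'$ rather than $a_{i_1,n_{i_1}-1}\delta'$; either way it is nonzero, so the independence conclusion stands.
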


\begin{proof}
First, we show that any feasible point $\tilde x$ in $S$ satisfies \eqref{ieq: 2}. 
Let $\bar{M}: = M_P - M_0$ and $\bar P: = \{ij \in P \mid i \in \bar M\}$. 
We have $|\bar P| = |\bar M| = |M_P - M_0|$.

\smallskip \noindent
\textbf{Case 1}: $\bar P \subseteq \supp(\tilde x)$.
We know that,
for any $i_1 j_1, i_2 j_2 \in P$, there is $i_1 \neq i_2$, and $\tilde x$ satisfies the complementarity constraint $\tilde x_{i j} \tilde x_{i, j'} = 0$ for any $j' \neq j \in N_i$, $i \in M$.
Therefore, in this case, $ \sum_{i \in M_P} \sum_{j \in N_i} a_{ij} \tilde x_{ij} \leq \sum_{ij \in P } a_{ij} = s$. Hence we have
\begin{align*}
\sum_{i \in M_P} \sum_{j \in N_i} a_{ij} \tilde x_{ij} +  (b-s) \sum_{ij \in \bar P} \tilde x_{ij} &
\leq \sum_{i \in M_P} \sum_{j \in N_i} a_{ij} \tilde x_{ij}  +  (b-s) |\bar P| \\
& \leq s +  (b-s) |\bar P| \\ 
& =  b + (|\bar P| - 1) (b-s) \\
& =  b + (|M_P - M_0| - 1) (b-s).
\end{align*}

\smallskip \noindent
\textbf{Case 2}: $\bar P \nsubseteq \supp(\tilde x)$. In this case, we have $|\{ij \mid ij \in \bar P,  ij \in \supp(\tilde x)\}| \leq  |\bar P | - 1 = |M_P - M_0| - 1$. Hence
\begin{align*}
 \sum_{i \in M_P} \sum_{j \in N_i} a_{ij} \tilde x_{ij} +  (b-s) \sum_{ij \in \bar P} \tilde x_{ij}
& \leq b  +  (b-s) |\{ij \mid ij \in \bar P, ij \in \supp(\tilde x)\}| \\
& \leq b + (|M_P - M_0| - 1) (b-s).
\end{align*}

From the discussion in the above two cases, we have shown that \eqref{ieq: 2} is satisfied by any feasible point $\tilde x \in S$, which means \eqref{ieq: 2} is valid for $PS$.

Next, we assume that $P$ is a maximal switching pack. 
A point $x$ with $x_{ij} = 1 \ \forall ij \in P$, and 0 elsewhere, satisfies \eqref{ieq: 2} at equality, and is in $S$, since by definition of pack we have $\sum_{ij \in P} a_{ij} < b$. 
Also, for any $i' \notin M_P$, $j' \in N_{i'}$, the point $x$ with $x_{i'j'} = \min\{1, \frac{b-s}{a_{i'j'}}\}$, $x_{ij} = 1 \ \forall ij \in P$, and $0$ elsewhere, is in $S$ and satisfies \eqref{ieq: 2} at equality. 
Hence so far we have found $\sum_{i \notin M_P} n_i + 1$ affinely independent feasible points of $S$ that satisfy \eqref{ieq: 2} at equality. 

Recall that $j = n_i$ for any $ij \in P$ when $P$ is a maximal switching pack. 
In the following, we discuss the dimension of the face induced by \eqref{ieq: 2}, and we consider separately two cases:

\smallskip \noindent
\textbf{Case 1:} $M_P \cap M_0 = \emptyset$.
For any $i' \in M_P$, $j' < n_{i'}$, consider the point $x$ with $x_{i n_i} = 1 \ \forall i \in M_P - i'$, $x_{i' j'} = \frac{a_{i' n_{i'}}  + b - s }{a_{i'j'}}$, and 0 elsewhere.
By definition of maximal switching pack and Assumption~\ref{asmp: 4}, we know $x_{i' j'} < 1$. 
It is easy to verify that $x$ is in $S$, and satisfies \eqref{ieq: 2} at equality. 
Therefore, we find another $\sum_{i \in M_P} (n_i - 1)$ points in $S$ that satisfy \eqref{ieq: 2} at equality.
Together with the previous $\sum_{i \notin M_P} n_i + 1$ points in $S$, we have found in total $\sum_{i \in M} n_i - |M_P| + 1 = d + 1 - |M_P|$ points of $S$ that satisfy \eqref{ieq: 2} at equality.
From our characterization of these points, we know that they are affinely independent. 
Therefore, the face given by \eqref{ieq: 2} has dimension at least $d - |M_P| = d - |M_P - M_0|$. 

\smallskip \noindent
\textbf{Case 2:} $M_P \cap M_0 \neq \emptyset$.
In this case we simply have $|M_P \cap M_0| \geq 1$.
For any $i' \in \bar M,$ we consider the following set
\begin{equation}
\label{eq: facet_set}
\begin{split}
\Big\{x \in [0,1]^d \mid \ & x_{in_i} = 1 \ \forall i \in \bar M - i', x_{ij} = 0 \ \forall i \in \bar M - i' \text{ and } j \in N_i - n_i,\\
& x_{i' n_{i'}} = 0, x_{ij} = 0 \ \forall i \notin M_P \text{ and } j \in N_i, \\
& \sum_{i \in M_P \cap M_0} a_{i1} x_{i1} + \sum_{j \in N_{i'} - n_{i'}} a_{i' j} x_{i' j} = b - \sum_{i \in \bar M - i'} a_{i n_i}, \\
& \text{ set } \{x_{i' 1}, \ldots, x_{i', n_{i'} - 1}\} \text{ is SOS1}
 \Big\}.
\end{split}
\end{equation}
By definition of maximal switching pack, we have $ \sum_{i \in M_P \cap M_0} a_{i1} + a_{i' j} > b - \sum_{i \in \bar M - i'} a_{i n_i}$ for any $j \in N_{i'} - n_{i'}$, so the set \eqref{eq: facet_set} has $|M_P \cap M_0| + n_{i'} - 1$ affinely independent points, which are all in $S$ and satisfy \eqref{ieq: 2} at equality. 
Hence in total we obtain $\sum_{i \in \bar M} (|M_P \cap M_0| + n_i - 1) = \sum_{i \in \bar M} n_i + |\bar M| (|M_P \cap M_0| - 1)$ affinely independent points in $S$ that satisfy \eqref{ieq: 2} at equality. 
Notice that these points all lie in the affine space 
\begin{equation}
\begin{split}
\Big\{x \in \R^d \mid & \ x_{ij} = 0 \ \forall i \notin M_P \text{ and } j \in N_i, \sum_{i \in \bar M} x_{i n_i} = |\bar M| - 1, \\
& \sum_{i \in M_P \cap M_0} a_{i1} x_{i1} + \sum_{j \in N_{i'} - n_{i'}} a_{i' j} x_{i' j} = b - \sum_{i \in \bar M - i'} a_{i n_i}\Big\},
\end{split}
\end{equation}
whose dimension is $d - \sum_{i \notin M_P} n_i - 2 = \sum_{i \in M_P} n_i - 2$. Since $|M_P \cap M_0| \geq 1$, we know that
the number of previous points $\sum_{i \in \bar M} n_i + |\bar M| (|M_P \cap M_0| - 1)$ is at least $\sum_{i \in \bar M} n_i + |M_P \cap M_0| - 1$, which equals $\sum_{i \in M_P} n_i - 1$. 
Hence, we obtain another $\sum_{i \in M_P} n_i - 1$ affinely independent points in $S$ that satisfy \eqref{ieq: 2} at equality. 
Together with the previous $\sum_{i \notin M_P} n_i + 1$ points in $S$, in total we have found $d$ points in $S$ which satisfy \eqref{ieq: 2} at equality, and this means that inequality \eqref{ieq: 2} is facet-defining when $M_P \cap M_0 \neq \emptyset$.
%
\end{proof}

Next, we provide two examples where inequality \eqref{ieq: 2} is either fact-defining, or it defines a face of high dimension.

\begin{example}
\label{exam: 1}
Let $M = [5]$, $n_1 = n_2 = n_3 = 1, n_4 = n_5 = 2$, and let the knapsack inequality in \eqref{CKP} be
\begin{equation}
\label{eq: example-defining}
2x_{11} + 4 x_{21} + 8 x_{31} + (10 x_{41} + 6 x_{42}) + (8 x_{51} + 4 x_{52}) \leq 21.
\end{equation}
Take $P_1 = \{(1,1), (3,1), (4,2), (5,2)\}$. Since $2 + 8 + 6 + 4 = 20 < 21$, and $ 2+ 8 + 10 + 4 > 21, 2 + 8 + 6 + 8 > 21$, we know $P_1$ is a maximal switching pack. Furthermore, $M_{P_1} \cap M_0 = \{1, 3\} \neq \emptyset$, therefore Theorem~\ref{theo: comp_ineq_1} gives facet-defining inequality $2x_{11} + 8 x_{31} + (10 x_{41} + 7 x_{42}) + (8 x_{51} + 5 x_{52}) \leq 22$.

Next, consider the pack $P_2 = \{(3,1), (4,2), (5,2)\}$.
Also $P_2$ is a maximal switching pack, with $M_{P_2} \cap M_0 \neq \emptyset$. 
$P_2$ gives us another facet-defining inequality $8 x_{31} + (10 x_{41} + 9 x_{42}) + (8 x_{51} + 7 x_{52}) \leq 24$.
$\hfill\diamond$
\end{example}

\begin{example}
\label{exam: 2}
Let $M = [4]$, $n_1 = 1, n_2 = n_3 = n_4 = 2$, and let the knapsack inequality in \eqref{CKP} be
\begin{equation}
\label{eq: example-defining_2}
2x_{11} + (14 x_{21} + 10 x_{22}) + (13 x_{31} + 9 x_{32}) + (9 x_{41} + 6 x_{42}) \leq 22.
\end{equation}
Take $P_1 = \{(1,1), (2,2), (3,2)\}$, which is a maximal switching pack, with $M_{P_1} \cap M_0 \neq \emptyset$. 
Hence, Theorem~\ref{theo: comp_ineq_1} gives facet-defining inequality $2 x_{11} +  (14 x_{21} + 11 x_{22}) + (13 x_{31} + 10 x_{32}) \leq 23.$ 

Now pick $P_2 = \{(2,2), (3,2)\}$, which is also a maximal switching pack, but with $M_{P_2} \cap M_0 = \emptyset$. 
The corresponding inequality \eqref{ieq: 2} is $(14 x_{21} + 13 x_{22}) + (13 x_{31} + 12 x_{32}) \leq 25$.
This inequality is not necessarily facet-defining, and Theorem~\ref{theo: comp_ineq_1} states that it induces a face of $PS$ of dimension at least $d-|M_{P_2}-M_0| = 7-2 = 5$.
In fact, the dimension of the corresponding face of $PS$ is exactly 5.
$\hfill\diamond$
\end{example}

For the first family of complementarity pack inequalities, it is natural to ask the same question as in Sect.~\ref{sec: complexity}: What is the separation complexity of the inequalities in the form of \eqref{ieq: 2}? For that we have the following conjecture.
\begin{conjecture}
\label{con 1}
For an arbitrary infeasible solution to \eqref{CKP}, it is \NP-complete to determine if there exists a separating inequality of the form \eqref{ieq: 2}.
\end{conjecture}
    
\subsection{Second complementarity pack inequalities}

The next theorem provides us with the second class of complementarity pack inequalities for $PS$ and provides a sufficient condition for them to be facet-defining.

\begin{theorem}
\label{theo: comp_ineq_2}
Let $P$ be a pack for $PS$ with $|M_P - M_0| \geq 2$, and $s := \sum_{ij \in P} a_{ij}$. 
For any $i^*j^* \in P$ with $i^* \notin M_0$ and $j^* = n_{i^*}$, the inequality
\begin{equation}
\label{ieq: 3}
\begin{split}
& \sum_{i \in M_P - i^*} \sum_{j \in N_i} a_{ij} x_{ij} + \sum_{\substack{ij \in P,\\ i \in M_P - M_0 - i^*}} (b-s) x_{ij} + a_{i^* j^*} x_{i^* j^*} \\ 
+ & \sum_{j \in N_{i^*} - j^*} a_{i^* j^*} \max\left\{1, \frac{a_{i^* j} }{ a_{i^* j^*} + b - s} \right\} x_{i^* j} \leq b + (|M_P - M_0| - 2) (b-s)
\end{split}
\end{equation}
is valid for $PS$. Furthermore, when $P$ is a maximal switching pack, \eqref{ieq: 3} is facet-defining.
\end{theorem}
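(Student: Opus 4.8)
The plan is to prove validity by a short case analysis on how a feasible point meets the pack $P$, and then to prove the facet claim by exhibiting $d$ affinely independent feasible points on the face induced by \eqref{ieq: 3}. Throughout write $\bar M:=M_P-M_0$ and, for $i\in M_P$, let $p_i$ be the unique index with $ip_i\in P$, so $p_{i^*}=j^*$. Fix $\tilde x\in S$. By the complementarity constraints at most one coordinate of $\tilde x$ in block $i^*$ is nonzero, say $\tilde x_{i^*k}$ (take $k=j^*$ if the whole block vanishes). Since for every $k\in N_{i^*}$ the coefficient of $x_{i^*k}$ in \eqref{ieq: 3} equals $a_{i^*j^*}\max\{1,a_{i^*k}/(a_{i^*j^*}+b-s)\}$ (the value $1$ being attained when $k=j^*$), the left-hand side of \eqref{ieq: 3} at $\tilde x$ equals $T'+a_{i^*j^*}\max\{1,a_{i^*k}/(a_{i^*j^*}+b-s)\}\,\tilde x_{i^*k}+(b-s)\sum_{i\in\bar M-i^*}\tilde x_{ip_i}$, where $T':=\sum_{i\in M_P-i^*}\sum_{j\in N_i}a_{ij}\tilde x_{ij}$. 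As $\tilde x_{i^*k}\le 1$ and $a_{i^*k}\tilde x_{i^*k}\le b-T'$ (from the knapsack inequality and \cref{asmp: 3}), the middle term is at most $a_{i^*j^*}\max\{1,(b-T')/(a_{i^*j^*}+b-s)\}$.

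Now split on $\Sigma:=\sum_{i\in\bar M-i^*}\tilde x_{ip_i}$. If $\Sigma\le|\bar M|-2$, then using in addition that the middle term is $\le a_{i^*k}\tilde x_{i^*k}$ (a consequence of \cref{asmp: 4}) and that $T'+a_{i^*k}\tilde x_{i^*k}\le b$, one gets that the left-hand side is at most $b+(|\bar M|-2)(b-s)$, which is the right-hand side. If instead $\Sigma>|\bar M|-2$, then since each summand is $\le 1$ and there are $|\bar M|-1$ of them they are all positive, so complementarity forces $\tilde x_{ij}=0$ for $j\ne p_i$, $i\in\bar M-i^*$; hence $T'\le\sum_{i\in\bar M-i^*}a_{ip_i}+\sum_{i\in M_P\cap M_0}a_{i1}=s-a_{i^*j^*}$. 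This is exactly the range in which $\max\{1,(b-T')/(a_{i^*j^*}+b-s)\}=(b-T')/(a_{i^*j^*}+b-s)$, and the function $T'\mapsto T'+a_{i^*j^*}(b-T')/(a_{i^*j^*}+b-s)$ is nondecreasing and equals $s$ at $T'=s-a_{i^*j^*}$; combined with $\Sigma\le|\bar M|-1$ this gives left-hand side $\le s+(|\bar M|-1)(b-s)=b+(|\bar M|-2)(b-s)$, as wanted.

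For the facet claim, recall $PS$ is full-dimensional, so it suffices to produce $d$ affinely independent points of $S$ satisfying \eqref{ieq: 3} at equality. The base point is $x^0:=\mathbf{1}_P$, which lies in $S$ since $s<b$. The remaining $d-1$ points come in five families, each a small modification of an already-built point that keeps the point feasible (knapsack tight, or below $b$) and the left-hand side of \eqref{ieq: 3} at its bound: (A) for $i\notin M_P$ and $j\in N_i$, raise $x_{ij}$ in $x^0$ to $\min\{1,(b-s)/a_{ij}\}$; (B) for $j<n_{i^*}$, switch block $i^*$ of $x^0$ to $j$ at value $\min\{1,(b-s+a_{i^*n_{i^*}})/a_{i^*j}\}$; (C) for $i'\in\bar M-i^*$ and $j<n_{i'}$, switch block $i'$ of $x^0$ to $j$ at value $(b-s+a_{i'n_{i'}})/a_{i'j}$, which is $<1$ by maximality of $P$ and \cref{asmp: 4}; (D) for $i\in M_P\cap M_0$, start from the family-(C) point that switched a fixed $i_0\in\bar M-i^*$ and trade a little of $x_{i1}$ for $x_{i_01}$; (E) for $i'\in\bar M-i^*$, take the point with block $i'$ at $n_{i'}-1$ and block $i^*$ at $n_{i^*}$ whose two active values lie on the segment $a_{i',n_{i'}-1}x_{i',n_{i'}-1}+a_{i^*n_{i^*}}x_{i^*n_{i^*}}=a_{i'n_{i'}}+a_{i^*n_{i^*}}+(b-s)$ with $x_{i^*n_{i^*}}$ chosen slightly below $1$ (so $x_{i',n_{i'}-1}$ remains below $1$) --- one checks every point of this segment is knapsack-tight and meets \eqref{ieq: 3} with equality. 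Families (A)--(E) contribute $\sum_{i\notin M_P}n_i$, $n_{i^*}-1$, $\sum_{i'\in\bar M-i^*}(n_{i'}-1)$, $|M_P\cap M_0|$, and $|\bar M|-1$ points, which together with $x^0$ total $d$.

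Affine independence is then verified by eliminating coordinates in order: the family-(A) differences from $x^0$ are the only ones supported off $M_P$; the coordinates $\{x_{i1}:i\in M_P\cap M_0\}$ isolate family (D); the coordinates $\{x_{i^*j}:j<n_{i^*}\}$ isolate family (B); and within each block $i'\in\bar M-i^*$ the families (C) and (E) separate because the family-(E) point has an $x_{i',n_{i'}-1}$-value different from that of the corresponding family-(C) point. The crux is family (E): the "switching" points (B)--(D) pin the coordinate $x_{i'n_{i'}}$ of each $i'\in\bar M$ to $0$ or $1$ and therefore span only a $(d-|\bar M|)$-dimensional subspace, so one must notice that the face still contains points where block $i^*$ is pushed off its upper bound precisely so as to pay for decreasing some $x_{i'n_{i'}}$, and that along the relevant segment the knapsack constraint stays tight automatically. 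A minor technical point to handle separately is the degenerate case $a_{i^*j^*}=0$, in which all coefficients of \eqref{ieq: 3} on block $i^*$ vanish and \eqref{ieq: 3} reduces to inequality \eqref{ieq: 2} for the pack $P\setminus\{i^*j^*\}$, to which \cref{theo: comp_ineq_1} applies.
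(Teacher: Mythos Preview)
Your argument is correct and follows essentially the same route as the paper: validity by a case split on whether all pack indices in $\bar M-i^*$ are active (your threshold on $\Sigma$ is exactly the paper's distinction between $\bar P\subseteq\supp(\tilde x)$ and $\bar P\nsubseteq\supp(\tilde x)$), and the facet claim by exhibiting $d$ affinely independent tight points built from the base point $\mathbf 1_P$ via the same kinds of perturbations the paper uses (outside $M_P$; switching block $i^*$; switching blocks $i'\in\bar M-i^*$; trading mass inside $M_P\cap M_0$; and a two-block exchange that frees the coordinate $x_{i'n_{i'}}$, which is precisely the role of the paper's sets \eqref{eq: facet_set_2}--\eqref{eq: facet_set_3}). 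Your coordinate-elimination check of affine independence is in fact more explicit than what the paper writes.

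One small gap remains in your treatment of the degenerate case $a_{i^*j^*}=0$. Your reduction to \cref{theo: comp_ineq_1} for the pack $P\setminus\{i^*j^*\}$ only delivers a facet when $M_{P\setminus\{i^*j^*\}}\cap M_0\ne\emptyset$, i.e.\ when $M_P\cap M_0\ne\emptyset$; otherwise that theorem gives merely a face of dimension at least $d-(|\bar M|-1)$. And indeed in that subcase your family (E) collapses onto family (C): with $a_{i^*n_{i^*}}=0$ the segment constraint forces $x_{i',n_{i'}-1}=\beta_{i',n_{i'}-1}$ regardless of $x_{i^*n_{i^*}}$, so $(E_{i'}-x^0)-(C_{i',n_{i'}-1}-x^0)=-\delta\,e_{i^*n_{i^*}}$ is the \emph{same} vector for every $i'$, giving only one new direction rather than $|\bar M|-1$. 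To close this subcase you need a different source of extra directions (for instance, pairing the switch in block $i'$ with a compensating move in block $i^*$ at some index $j<n_{i^*}$, which is possible since $a_{i^*,n_{i^*}-1}>b-s>0$ by the maximal switching property).
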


\begin{proof}
First, let $\tilde x$ in $S$.
We show that inequality \eqref{ieq: 3} is valid for $\tilde x$. 
Here we denote by $\bar M: = \{i \in M_P \mid i \notin M_0, i \neq i^*\}$, $\bar P: = \{ij \in P \mid i \in \bar M\}$.
We have $|\bar P| = |\bar M| = |M_P - M_0| - 1$.

\smallskip \noindent
\textbf{Case 1:} $\bar P \subseteq \supp(\tilde x)$.
In this case, we have $ \sum_{i \in M_P, i \neq i^*} \sum_{j \in N_i} a_{ij} \tilde x_{ij} + a_{i^* j^*}  \leq \sum_{ij \in P} a_{ij} = s$.

\smallskip \noindent
\textbf{Case 1a:} $\tilde x_{i^* j^*} > 0$. Because $\tilde x$ satisfies the complementarity consrtaint, we know that $\tilde x_{i^* j} = 0$ for all $j \in N_{i^*} - j^*$.
So the left-hand side of \eqref{ieq: 3} evaluated in $\tilde x$ is
\begin{align*}
\sum_{\substack{i \in M_P, \\ i \neq i^*}} \sum_{j \in N_i} a_{ij} \tilde x_{ij} + \sum_{\substack{ij \in P,\\ i \notin M_0, i \neq i^*}} (b-s) \tilde x_{ij} + a_{i^* j^*} \tilde x_{i^* j^*} 
& \leq s +  (b-s) |\bar P| \\ 
& = s + (|M_P - M_0| - 1) (b-s) \\
& = b + (|M_P - M_0| - 2) (b-s).
\end{align*}

\smallskip \noindent
\textbf{Case 1b:} There exists $j' \in N_{i^*} - j^*$ such that $\tilde x_{i^* j'} \in (0,1]$. Since $\tilde x \in S$, we have $a_{i^* j'} \tilde x_{i^* j'} \leq b - \sum_{i \in M_P, i \neq i^*} \sum_{j \in N_i} a_{ij} \tilde x_{ij}$. 
Let $K: = \sum_{i \in M_P, i \neq i^*} \sum_{j \in N_i} a_{ij} \tilde x_{ij}$.
We then have $K \leq s - a_{i^* j^*}$. 
Thus, the left-hand side of \eqref{ieq: 3} evaluated in $\tilde x$ is
\begin{align*}
&  \sum_{\substack{i \in M_P, \\ i \neq i^*}} \sum_{j \in N_i} a_{ij} \tilde x_{ij} + \sum_{\substack{ij \in P,\\ i \notin M_0, i \neq i^*}} (b-s) \tilde x_{ij} + a_{i^* j^*} \max \left\{1, \frac{a_{i^* j'}}{ a_{i^* j^*} + b - s} \tilde x_{i^* j'}  \right\}\\
& \leq K + (|M_P - M_0| - 1)(b-s) + a_{i^* j^*} \max \left\{1, \frac{b - K}{a_{i^* j^*} + b - s} \right\} \\
& = \max \left\{K + a_{i^* j^*}, K + a_{i^* j^*} \frac{b - K}{a_{i^* j^*} + b - s} \right\} + (|M_P - M_0| - 1)(b-s) \\
& \leq \max \left\{s, \frac{K(b-s) + a_{i^* j^*} b}{a_{i^* j^*} + b - s} \right\} +  (|M_P - M_0| - 1)(b-s) \\
& \leq \max \left\{s, \frac{(s - a_{i^* j^*})(b-s) + a_{i^* j^*} b}{a_{i^* j^*} + b - s} \right\} +  (|M_P - M_0| - 1)(b-s) \\
& = \max\{s, s\} +   (|M_P - M_0| - 1)(b-s) \\
& = b +   (|M_P - M_0| - 2)(b-s).
\end{align*}

\smallskip \noindent
\textbf{Case 1c:} $\tilde x_{i^* j} = 0$ for all $j \in N_{i^*}$.
The left-hand side of \eqref{ieq: 3} evaluated in $\tilde x$ is
\begin{align*}
\sum_{\substack{i \in M_P, \\ i \neq i^*}} \sum_{j \in N_i} a_{ij} \tilde x_{ij} + \sum_{\substack{ij \in P,\\ i \notin M_0, i \neq i^*}} (b-s) \tilde x_{ij}
& \leq s +   (|M_P - M_0| - 1)(b-s) \\
& = b +   (|M_P - M_0| - 2)(b-s).
\end{align*}

\smallskip \noindent
\textbf{Case 2:} $\bar P \nsubseteq \supp(\tilde x)$. 
In this case, we have $|\{i j \mid ij \in \bar P, ij \in \supp(\tilde x)\}| \leq |\bar P| - 1 = |M_P - M_0| - 2$. 
Note that $j^* = n_{i^*}$, and $a_{i^* j^*} \max\{1, \frac{a_{i^* j} }{ a_{i^* j^*} + b - s}\} \leq a_{i^* j}$ for all $j \in N_{i^*} - j^*$, so the left-hand side of \eqref{ieq: 3} evaluated in $\tilde x$ is at most 
\begin{align*}
b +  (b-s)|\{i j \mid ij \in \bar P, ij \in \supp(\tilde x)\}| 
\leq b +   (|M_P - M_0| - 2)(b-s).
\end{align*}

So far we have concluded that \eqref{ieq: 3} is valid for $PS$. 
Next we show that, if $P$ is a maximal switching pack, then \eqref{ieq: 3} is facet-defining.

Consider the point $x$ with $x_{ij }= 1 \ \forall ij \in P,$ and 0 elsewhere. 
It is easy to check that $x$ is in $S$ and satisfies \eqref{ieq: 3} at equality. 
Also, for any $i' \in M_P$, $j' \in N_{i'}$, consider the point $x$ with $x_{ij }= 1 \ \forall ij \in P$, $x_{i'j'} = \min\{1, \frac{b - s}{a_{i'j'}}\},$ and 0 elsewhere. 
Also this point is in $S$, and satisfies \eqref{ieq: 3} at equality. 
So in total, we obtain $\sum_{i \notin M_P} n_i + 1$ affinely independent points in $S$, which all satisfy \eqref{ieq: 3} at equality.

Next, for any $j' \in N_{i^*} - n_{i^*}$, consider the point $x$ with $x_{ij} = 1 \ \forall ij \in P$, $x_{i^* j'} = \frac{a_{i^* n_{i^*}} + b - s}{a_{i^* j'}}$, and 0 elsewhere. 
Since $P$ is a maximal switching pack and $i^* \notin M_0$, we have $s - a_{i^* n_{i^*}} + a_{i^* j'} > b$, so $ x_{i^* j'} = \frac{a_{i^* n_{i^*}} + b - s}{a_{i^* j'}} < 1$, and $x \in PS$.
The left-hand side of \eqref{ieq: 3} evaluated in $\tilde x$ is 
\begin{align*}
& s - a_{i^* n_{i^*}} + (|M_P - M_0| - 1)(b-s) + a_{i^* n_{i^*}} \cdot \frac{a_{i^* j'}}{a_{i^* n_{i^*}} + b - s} \cdot \frac{a_{i^* n_{i^*}} + b - s}{a_{i^* j'}} \\
& = b + (|M_P - M_0| - 2)(b-s).
\end{align*}
Therefore, we have found $n_{i^*} - 1$ points in $S$ that satisfy \eqref{ieq: 3} at equality.

Recall that when $P$ is maximal switching pack, then for any $ij \in P$, we have $j = n_i$.
Arbitrarily pick $i' \in \bar M$, and consider the following set:
\begin{equation}
\label{eq: facet_set_2}
\begin{split}
\Big\{x \in [0,1]^d \mid \ & x_{ij} = 0 \ \forall i \notin M_P \text{ and } j \in N_i, x_{i' n_{i'}} = 0 ,\\
& x_{i n_i} = 1 \text{ and } x_{ij} = 0\ \forall i \in \bar M - i' \text{ and } j < n_i,  x_{i^* j} = 0 \ \forall j < n_{i^*},\\
& \sum_{i \in M_P \cap M_0} a_{i1} x_{i1} + a_{i^* n_{i^*}} x_{i^* n_{i^*}} + \sum_{j < n_{i'}} a_{i' j} x_{i' j} = b - \sum_{i \in \bar M - i'} a_{i n_i},\\
& \text{ set } \{x_{i' 1}, \ldots, x_{i', n_{i'} - 1}\} \text{ is SOS1}
 \Big\}.
\end{split}
\end{equation}
It is simple to verify that any point in the set \eqref{eq: facet_set_2} satisfies the complementarity constraint, as well as the knapsack constraint~\eqref{constraint: knapsack} (in fact, it is satisfied at equality), and it satisfies \eqref{ieq: 3} at equality. 
Since $P$ is a maximal switching pack, we have $\sum_{i \in M_P} a_{i n_i} - a_{i' n_{i'}} + a_{i' j} > b$ for any $j < n_{i'}$, so $\sum_{i \in M_P \cap M_0} a_{i1} + a_{i^* n_{i^*}} +  a_{i' j}  > b - \sum_{i \in \bar M - i'} a_{i n_i}$ for any $j < n_{i'}$. 
Therefore, the set \eqref{eq: facet_set_2} contains $|M_P \cap M_0| + n_{i'}$ affinely independent points which all satisfy \eqref{ieq: 3} at equality.
Lastly, for any $i'' \in \bar M - i'$, we consider the set 
\begin{equation}
\label{eq: facet_set_3}
\begin{split}
\Big\{x \in [0,1]^d \mid \ & x_{ij} = 0 \ \forall i \notin M_P \text{ and } j \in N_i, x_{i'' n_{i''}} = 0, x_{i 1} = 1 \ \forall i \in M_P \cap M_0,\\
& x_{i n_i} = 1 \text{ and } x_{ij} = 0\ \forall i \in \bar M - i'' \text{ and } j < n_i,  x_{i^* j} = 0 \ \forall j < n_{i^*},\\
& a_{i^* n_{i^*}} x_{i^* n_{i^*}} + \sum_{j < n_{i''}} a_{i'' j} x_{i'' j} = b - s + a_{i^* n_{i^*}} + a_{i'' n_{i''}}, \\
& \text{ set } \{x_{i'' 1}, \ldots, x_{i'', n_{i''} - 1}\} \text{ is SOS1}
 \Big\}.
\end{split}
\end{equation}
The points in the set \eqref{eq: facet_set_3} are in $S$, and satisfy \eqref{ieq: 3} at equality.
Furthermore, we can find $n_{i''}$ affinely independent points in the set \eqref{eq: facet_set_3}.

In total, we have found $\sum_{i \notin M_P} n_i + 1 + n_{i^*} - 1 + |M_P \cap M_0 | + n_{i'} + \sum_{i'' \in \bar M - i'} n_{i''} = d$ points in $S$ that satisfy \eqref{ieq: 3} at equality.
Furthermore, according to their construction, all these points are affinely independent. 
This concludes the proof that \eqref{ieq: 3} is facet-defining when $P$ is a maximal switching pack.
\end{proof}


Next, we present examples illustrating that one can obtain several different facet-defining inequalities from the same maximal switching pack with different indices $i^*$.


\begin{example}
\label{exam: 3}
Let $M = [5]$, $n_1 = n_2  = 1, n_3 = n_4 = n_5 = 2$, and let the knapsack inequality in \eqref{CKP} be
\begin{equation}
\label{eq: example-defining_3}
x_{11} + 6 x_{21} + (14 x_{31} + 10 x_{32}) + (13 x_{41} + 9 x_{42}) + (12 x_{51} + 8 x_{52}) \leq 36.
\end{equation}
For the maximal switching pack $P_1 = \{(1,1), (2,1), (3,2), (4,2), (5,2)\}$, we have $s = \sum_{ij \in P_1}a_{ij} = 1 + 6 + 10 + 9 + 8 = 34$. 
When $i^* = 3$, \eqref{ieq: 3} gives the facet-defining inequality $x_{11} + 6 x_{21}  + (13 x_{41} + 9 x_{42}) + (12 x_{51} + 8 x_{52}) + (36 - 34)\cdot (x_{42} + x_{52}) + 10 x_{32} + 10 \cdot \max\{1, \frac{14}{10 + 36 - 34}\} x_{31} \leq 36 + (36 - 34) \cdot (3-2)$, which can be simplified to
$$
x_{11} + 6 x_{21} + \pare{\frac{35}{3} x_{31} + 10 x_{32}} + (13 x_{41} + 11 x_{42}) + (12 x_{51} + 10 x_{52}) \leq 38.
$$
For the same maximal switching pack $P_1$, when $i^* = 4$ and $5$, inequality \eqref{ieq: 3} gives two other facet-defining inequalities: 
\begin{align*}
& x_{11} + 6 x_{21} + (14 x_{31} + 12 x_{32}) + \pare{\frac{117}{11} x_{41} + 9 x_{42}} + (12 x_{51} + 10 x_{52}) \leq 38, \\
& x_{11} + 6 x_{21} + (14 x_{31} + 12 x_{32}) + (13 x_{41} + 11 x_{42}) + \pare{\frac{48}{5} x_{51} + 8 x_{52}} \leq 38.
\end{align*}
Consider now the maximal switching pack $P_2 =  \{(2,1), (3,2), (4,2), (5,2)\}$.
Setting $i^* = 3$, $4$, and $5$ gives us the following three facet-defining inequalities, respectively:
\begin{align*}
& 6 x_{21} + \pare{\frac{140}{13} x_{31} + 10 x_{32}} + (13 x_{41} + 12 x_{42}) + (12 x_{51} + 11 x_{52}) \leq 39,\\
& 6 x_{21} + (14 x_{31} + 13 x_{32}) + \pare{\frac{39}{4} x_{41} + 9 x_{42}} + (12 x_{51} + 11 x_{52}) \leq 39, \\
& 6 x_{21} + (14 x_{31} + 13 x_{32}) + (13 x_{41} + 12 x_{42}) + \pare{\frac{96}{11} x_{51} + 8 x_{52}} \leq 39.
\end{align*}
$\hfill\diamond$
\end{example}

Similarly to Conjecture~\ref{con 1}, which we gave for the first complementarity pack inequalities, we pose the following conjecture regarding the separation complexity of inequalities of the form \eqref{ieq: 3}. 

\begin{conjecture}
For an arbitrary infeasible solution to \eqref{CKP}, it is \NP-complete to determine if there exists a separating inequality of the form \eqref{ieq: 3}.
\end{conjecture}

\subsection{Third complementarity pack inequalities}

Our final family of facet-defining complementarity pack inequalities is defined in Theorem~\ref{theo: comp_ineq_3}.

\begin{theorem}
\label{theo: comp_ineq_3}
Let $P$ be a pack for $PS$ with $|M_P - M_0| \geq 2$, and $s:=\sum_{ij \in P} a_{ij}$.
For any $i^*j^* \in P$ with $i^* \notin M_0, j^* = n_{i^*}$ and any $i' \in M_P \cap M_0$, the inequality
\begin{equation}
\label{ieq: 4}
\begin{split}
 \frac{a_{i^* j^*} \cdot a_{i' 1}}{a_{i^* j^*} + b - s} x_{i' 1} + & \sum_{i \in M_P - \{i', i^*\}} \sum_{j \in N_i} a_{ij} x_{ij} + \sum_{\substack{ij \in P,\\ i \in M_P - M_0 - i^*}} \left(b-s + \frac{(b-s) a_{i'1}}{a_{i^* j^*} + b - s} \right) x_{ij} \\
+ & a_{i^* j^*} x_{i^* j^*}  +   \sum_{j \in N_{i^*} - j^*} a_{i^* j^*} \max \left\{1, \frac{a_{i^* j} }{ a_{i^* j^*} + b - s} \right\} x_{i^* j} \\
\leq & b + (|M_P - M_0| - 2) (b-s) \pare{1 +  \frac{ a_{i'1}}{a_{i^* j^*} + b - s}}
\end{split}
\end{equation}
is valid for $PS$. Furthermore, when $P - i' 1$ is a maximal switching pack, \eqref{ieq: 4} is facet-defining.
\end{theorem}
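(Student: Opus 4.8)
The plan is to mirror the structure of the proofs of Theorems~\ref{theo: comp_ineq_1} and~\ref{theo: comp_ineq_2}, since \eqref{ieq: 4} is obtained from \eqref{ieq: 3} by a further lifting step on the variable $x_{i'1}$ for a single index $i' \in M_P \cap M_0$. I would first establish validity by taking an arbitrary $\tilde x \in S$ and splitting on whether $\tilde x_{i'1} = 0$ or $\tilde x_{i'1} > 0$. In the case $\tilde x_{i'1} = 0$, every term involving $a_{i'1}$ drops out of the left-hand side and the right-hand side only decreases relative to the bound in \eqref{ieq: 3}; more precisely, one argues that \eqref{ieq: 4} restricted to this face coincides (up to the harmless reduction of the constant) with the valid inequality \eqref{ieq: 3} applied to the pack $P$ with the same $i^*j^*$, so validity is inherited from Theorem~\ref{theo: comp_ineq_2}. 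The case $\tilde x_{i'1} > 0$ is the substantive one: here I would again subdivide according to the three sub-cases of the proof of Theorem~\ref{theo: comp_ineq_2} for the index $i^*$ (namely $\tilde x_{i^*n_{i^*}} > 0$; some $\tilde x_{i^*j} \in (0,1]$ with $j < n_{i^*}$; or $\tilde x_{i^*j} = 0$ for all $j \in N_{i^*}$), and in each of them carry out the same convexity/max-bounding estimates, now carrying the extra additive contribution $\frac{a_{i^*j^*}a_{i'1}}{a_{i^*j^*}+b-s}\tilde x_{i'1}$ and the enlarged coefficients $(b-s)(1 + \frac{a_{i'1}}{a_{i^*j^*}+b-s})$ on $\bar P$. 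The coefficient on $x_{i'1}$ and the coefficient multiplying $(b-s)$ on the right-hand side are precisely the lifting coefficients that make the worst-case bound tight, so the arithmetic closes exactly as in Theorem~\ref{theo: comp_ineq_2}.

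For the facet-defining claim, assume $P - i'1$ is a maximal switching pack (note this is a pack on $M_P - i'$, so $|M_{P-i'1}-M_0|=|M_P-M_0|\geq 2$ and $i^* \in M_{P-i'1}-M_0$, so Theorem~\ref{theo: comp_ineq_2} applies to it with the same $i^*$). The key observation is that setting $x_{i'1} = 0$ and intersecting with the hyperplane of \eqref{ieq: 4} yields exactly the facet of $PS$ induced by \eqref{ieq: 3} for the pack $P - i'1$ (restricted to the subspace $x_{i'1}=0$); by Theorem~\ref{theo: comp_ineq_2} that facet of the face $\{x_{i'1}=0\}\cap PS$ has dimension $d-2$, giving $d-1$ affinely independent points of $S$ on \eqref{ieq: 4} with $x_{i'1}=0$. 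It then suffices to exhibit one additional point of $S$ satisfying \eqref{ieq: 4} at equality with $x_{i'1} > 0$; I would take $x$ with $x_{ij}=1$ for all $ij \in P - i'1$, $x_{i^*n_{i^*}} = 0$, $x_{i'1} = \frac{a_{i^*n_{i^*}} + b - s}{a_{i'1}}$ (which lies in $(0,1)$ because $P-i'1$ is a maximal switching pack, so $s - a_{i^*n_{i^*}} < b$, hence $a_{i^*n_{i^*}}+b-s < a_{i^*n_{i^*}} \le a_{i'1}$ after possibly re-deriving the needed inequality from the maximality condition), and $0$ elsewhere; a direct substitution shows this point satisfies \eqref{ieq: 4} at equality and is in $S$. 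Since this point has $x_{i'1}\neq 0$ while the previous $d-1$ all have $x_{i'1}=0$, the full collection of $d$ points is affinely independent, and \eqref{ieq: 4} is facet-defining.

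The main obstacle I anticipate is the validity computation in the sub-case where $\tilde x_{i'1} > 0$ and simultaneously some $\tilde x_{i^*j} \in (0,1]$ with $j < n_{i^*}$: here one must chain together the $\max$-linearization bound on the $x_{i^*j}$ term, the knapsack-feasibility bound $a_{i^*j}\tilde x_{i^*j} \le b - K - a_{i'1}\tilde x_{i'1}$ (now with the extra $-a_{i'1}\tilde x_{i'1}$ appearing because $i'$ is also in $M_P$), and the bound $K \le s - a_{i^*j^*} - a_{i'1}$ coming from the pack structure, and verify that after multiplying through by $\frac{a_{i^*j^*}}{a_{i^*j^*}+b-s}$ and collecting the $\tilde x_{i'1}$-coefficient everything still telescopes to $b + (|M_P-M_0|-2)(b-s)(1 + \frac{a_{i'1}}{a_{i^*j^*}+b-s})$. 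I expect this to reduce, after simplification, to the same $\max\{s,s\}$ identity that appears in Case~1b of the proof of Theorem~\ref{theo: comp_ineq_2}, but tracking the $x_{i'1}$ terms carefully is where the bookkeeping is heaviest; a cleaner route may be to first prove \eqref{ieq: 4} for points with $x_{i'1}=1$ fixed (reducing $b$ to $b - a_{i'1}$, which is exactly the shift that turns \eqref{ieq: 4} into \eqref{ieq: 3} for $P-i'1$ on a modified right-hand side) and then argue by convexity in the single coordinate $x_{i'1}$ that validity on the two extremes $x_{i'1}\in\{0,1\}$ plus linearity of all the relevant bounds in $x_{i'1}$ extends it to the whole range.
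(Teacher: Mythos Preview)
Your high-level plan---reduce to Theorem~\ref{theo: comp_ineq_2} on the face $x_{i'1}=0$ and then add one point with $x_{i'1}>0$---does not go through, for two concrete reasons.

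\textbf{Validity.} Your claim that ``every term involving $a_{i'1}$ drops out of the left-hand side'' when $\tilde x_{i'1}=0$ is false: the tilted coefficients $(b-s)\bigl(1+\tfrac{a_{i'1}}{a_{i^*j^*}+b-s}\bigr)$ on the $\bar P$ variables still contain $a_{i'1}$. Consequently, on $\{x_{i'1}=0\}$ the left-hand side of \eqref{ieq: 4} can exceed that of \eqref{ieq: 3} for $P$ by as much as $(|M_P-M_0|-1)\tfrac{(b-s)a_{i'1}}{a_{i^*j^*}+b-s}$, while the right-hand side increases by only $(|M_P-M_0|-2)\tfrac{(b-s)a_{i'1}}{a_{i^*j^*}+b-s}$; so \eqref{ieq: 4}$|_{x_{i'1}=0}$ is \emph{not} implied by \eqref{ieq: 3}. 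The same obstruction defeats your alternative ``convexity in $x_{i'1}$'' route: even at the endpoint $x_{i'1}=0$ you cannot invoke Theorem~\ref{theo: comp_ineq_2}. The paper instead splits on whether $\bar P\subseteq\supp(\tilde x)$, exactly as in the proof of Theorem~\ref{theo: comp_ineq_2}, and carries both $\tilde x_{i'1}$ and the $\bar P$ lift through the Case~1b estimate directly.

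\textbf{Facet.} Your assertion that \eqref{ieq: 4} restricted to $x_{i'1}=0$ coincides with \eqref{ieq: 3} for the pack $P-i'1$ is also false: for $P'=P-i'1$ one has $s'=s-a_{i'1}$, so in \eqref{ieq: 3} for $P'$ the $\bar P$ lift is $(b-s')=(b-s+a_{i'1})$ and the $\max$ denominator is $a_{i^*j^*}+b-s+a_{i'1}$, whereas \eqref{ieq: 4}$|_{x_{i'1}=0}$ keeps $(b-s)\bigl(1+\tfrac{a_{i'1}}{a_{i^*j^*}+b-s}\bigr)$ and denominator $a_{i^*j^*}+b-s$. These are different hyperplanes, so you cannot import $d-1$ tight points from Theorem~\ref{theo: comp_ineq_2}. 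Moreover, your proposed extra point (all of $P-\{i'1,i^*j^*\}$ at $1$, $x_{i^*j^*}=0$, $x_{i'1}=\tfrac{a_{i^*j^*}+b-s}{a_{i'1}}$) is not tight: substituting gives LHS$-$RHS $=-\tfrac{a_{i^*j^*}a_{i'1}}{a_{i^*j^*}+b-s}<0$. (And the claimed bound $a_{i^*n_{i^*}}\le a_{i'1}$ needed for $x_{i'1}<1$ is not implied by any assumption; Assumption~\ref{asmp: 4} orders only within a single $N_i$.) The paper's facet argument instead builds $d$ tight points directly---the all-$P$ point, the $\sum_{i\notin M_P}n_i$ points with one outside coordinate raised, and three families of structured sets analogous to \eqref{eq: facet_set_2}--\eqref{eq: facet_set_3}---several of which have $x_{i'1}=1$ and some $x_{i'1}=0$, rather than $d-1$ points all on $\{x_{i'1}=0\}$.
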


We remark that inequality \eqref{ieq: 4} is simply obtained by tilting the previous inequality \eqref{ieq: 3}: For a fixed index $i' \in M_0$, we: 
(i) subtract from the original coefficient $a_{i'1}$ in \eqref{ieq: 3} the quantity $(b-s)\frac{a_{i'1}}{a_{i^* j^* + b - s}}$, 
(ii) add the same amount $(b-s)\frac{a_{i'1}}{a_{i^* j^* + b - s}}$ to the coefficients of $x_{ij}$, for $ij \in P$ and $i \notin i^*$, and
(iii) multiply the right-hand side of the inequality by $(1 +  \frac{ a_{i'1}}{a_{i^* j^*} + b - s})$.

\begin{proof}[Proof of Theorem~\ref{theo: comp_ineq_3}]
Let $\tilde x$ be a vector in $S$.
We show that inequality \eqref{ieq: 4} is satisfied by $\tilde x$. 
For ease of exposition, we define $\bar M: = M_P - M_0 - i^*$ and $\bar P: = \{ij \in P \mid i \in \bar M\}$.

\smallskip \noindent
\textbf{Case 1:} $\bar P \subseteq \supp(\tilde x)$.
In this case, since $\tilde x \in S$, we have $ \sum_{i \in M_P-i'} \sum_{j \in N_i} a_{ij} \tilde x_{ij} + a_{i' 1}  \leq \sum_{ij \in P} a_{ij} = s$.

\smallskip \noindent
\textbf{Case 1a:} $\tilde x_{i^* j} = 0$ for any $j \in N_{i^*}-j^*$.
In this case, the left-hand side of inequality \eqref{ieq: 4} evaluated in $\tilde x$ is
\begin{equation}
\begin{split}
& \sum_{i \in M_P-i'} \sum_{j \in N_i} a_{ij} \tilde x_{ij} +  \frac{a_{i^* j^*} a_{i'1}}{a_{i^* j^*} + b - s} \tilde x_{i' 1} + \sum_{ij \in \bar P} \left(b-s + \frac{(b-s) a_{i'1}}{a_{i^* j^*} + b - s} \right) \tilde x_{ij}  \\
& \leq s - a_{i' 1} +  \frac{a_{i^* j^*} a_{i'1}}{a_{i^* j^*} + b - s} + \left(b-s + \frac{(b-s) a_{i'1}}{a_{i^* j^*} + b - s} \right) (|M_P - M_0|  - 1) \\
& = b + (|M_P - M_0| - 2) (b-s) \pare{1 +  \frac{ a_{i'1}}{a_{i^* j^*} + b - s}}.
\end{split}
\end{equation}
So \eqref{ieq: 4} is satisfied by point $\tilde x$. 

\smallskip \noindent
\textbf{Case 1b:} There exists $j' \in N_{i^*} - j^*$ such that $\tilde x_{i^* j'} \in (0,1]$. 
Let $K := \sum_{i \in M_P-\{i^*,i'\}} \sum_{j \in N_i} a_{ij} \tilde x_{ij}$. 
Since $\bar P \subseteq \supp(\tilde x)$, we obtain $K \leq s - a_{i'1} - a_{i^* j^*}$, as well as $a_{i' 1} \tilde x_{i' 1} + a_{i^* j'} \tilde x_{i^* j'} + K \leq b$. Thus, the left-hand side of inequality \eqref{ieq: 4} evaluated in $\tilde x$ is
\begin{equation}
\begin{split}
& K + \frac{a_{i'1} a_{i^* j^*} \tilde x_{i'1}}{ a_{i^* j^*} + b - s} + \max \left\{a_{i^* j^*} \tilde x_{i^* j'}, \frac{a_{i^* j^*} a_{i^* j'}  \tilde x_{i^* j'}}{a_{i^* j^*} + b - s} \right\} \\
& \quad + \sum_{ij \in \bar P} \left(b-s + \frac{(b-s) a_{i'1}}{a_{i^* j^*} + b - s} \right) \tilde x_{ij} \\
& \leq \max \left\{K + \frac{a_{i'1} a_{i^* j^*} }{ a_{i^* j^*} + b - s} + a_{i^* j^*} , K + \frac{a_{i^* j^*} (a_{i' 1} \tilde x_{i' 1} + a_{i^* j'} \tilde x_{i^* j'})}{a_{i^* j^*} + b - s}  \right\} \\
& \quad + \left(b-s + \frac{(b-s) a_{i'1}}{a_{i^* j^*} + b - s} \right) (|M_P - M_0|  - 1)\\
& \leq \max \left\{K + \frac{a_{i'1} a_{i^* j^*} }{ a_{i^* j^*} + b - s} + a_{i^* j^*} , K + \frac{a_{i^* j^*} (b - K)}{a_{i^* j^*} + b - s}  \right\} \\
& \quad + (|M_P - M_0| - 1) (b - s) \pare{1 +  \frac{ a_{i'1}}{a_{i^* j^*} + b - s}}\\
& \leq \max \left\{ s - a_{i' 1} + \frac{a_{i'1} a_{i^* j^*} }{ a_{i^* j^*} + b - s}, \frac{ (b - s) K + a_{i^* j^*} b}{a_{i^* j^*} + b - s} \right\} \\
& \quad + (|M_P - M_0| - 1) (b - s) \pare{1 +  \frac{ a_{i'1}}{a_{i^* j^*} + b - s}}\\
& = s - a_{i' 1} + \frac{a_{i'1} a_{i^* j^*} }{ a_{i^* j^*} + b - s} + (|M_P - M_0| - 1) (b - s) (1 +  \frac{ a_{i'1}}{a_{i^* j^*} + b - s})\\
& = b + (|M_P - M_0| - 2) (b-s) \pare{1 +  \frac{ a_{i'1}}{a_{i^* j^*} + b - s}}.
\end{split}
\end{equation}

\smallskip \noindent
\textbf{Case 2:} $\bar P \nsubseteq \supp(\tilde x)$. 
In this case, we have $|\{i j \mid ij \in \bar P, ij \in \supp(\tilde x)\}| \leq |\bar P| - 1 = |M_P - M_0| - 2$. 
Note that $ \frac{a_{i^* j^*} \cdot a_{i' 1}}{a_{i^* j^*} + b - s} \leq a_{i' 1}$ and $a_{i^* j^*} \max\{1, \frac{a_{i^* j} }{ a_{i^* j^*} + b - s}\}  \leq a_{i^* j}$ for all $j \in N_{i^*} - j^*$, so the left-hand side of \eqref{ieq: 4} evaluated in $\tilde x$ is at most
\begin{align*}
& b +  (b-s)(1 +  \frac{ a_{i'1}}{a_{i^* j^*} + b - s})|\{i j \mid ij \in \bar P, ij \in \supp(\tilde x)\}| \\
& \leq b +   (|M_P - M_0| - 2)(b-s)\pare{1 +  \frac{ a_{i'1}}{a_{i^* j^*} + b - s}}.
\end{align*}

So far we have shown that inequality \eqref{ieq: 4} is valid for $PS$.
Next, we assume that $P - i'1$ is a maximal switching pack, and we show that \eqref{ieq: 4} is facet-defining. 
We note that, since $P - i' 1$ is a maximal switching pack and $P$ is a pack, we have that $P$ is also a maximal switching pack. 
Hence, for any $ij \in P$, $j = n_i.$

Consider the point $x$ with $x_{ij} = 1 \ \forall ij \in P$, and 0 elsewhere.
It is easy to check that $x \in S$ and it satisfies \eqref{ieq: 4} at equality. 
For any $i' \notin M_P$ and $j' \in N_{i'}$, consider the point $x$ with $x_{ij} = 1 \ \forall ij \in P$, $x_{i'j'} = \min\{1, \frac{b - s}{a_{i'j'}}\}$, and 0 elsewhere. 
Also this point is in $S$ and it satisfies \eqref{ieq: 4} at equality. 
Thus we have found $\sum_{i \notin M_P} n_i + 1$ points in $S$ that satisfy inequality \eqref{ieq: 4} at equality.

Next, we consider the following set
\begin{equation}
\label{eq: facet_set_4}
\begin{split}
\Big\{x \in [0,1]^d \mid \ & x_{i n_i} = 1, x_{ij} = 0 \ \forall i \in M_P - i' - i^*, j < n_{i},\\ 
& x_{i^* n_{i^*}} = 0, x_{ij} = 0 \ \forall i \notin M_P, j \in N_i, \\
& a_{i' 1} x_{i' 1} + \sum_{j < n_{i^*}} a_{i^* j} x_{i^* j} = b - s + a_{i' 1} + a_{i^* n_{i^*}}, \\
& \text{ set } \{x_{i^* 1}, \ldots, x_{i^*, n_{i^*} - 1}\} \text{ is SOS1}
 \Big\}.
\end{split}
\end{equation}
Since $P$ is a maximal switching pack, we have $s - a_{i^* n_{i^*}} + a_{i^* j} > b$ for any $j < n_{i^*}$. 
Hence, the set \eqref{eq: facet_set_4} has $n_{i^*}$ affinely independent points.
For any point $x$ in set \eqref{eq: facet_set_4}, the left-hand side of \eqref{ieq: 4} evaluated in $x$ is 
\begin{align*}
& \frac{a_{i^* n_{i^*}}  (b - s + a_{i' 1} + a_{i^* n_{i^*}})  }{a_{i^* n_{i^*}} + b - s}+ s - a_{i' 1} - a_{i^* n_{i^*}} \\
& \quad + (|M_P - M_0| - 1)(b-s)\pare{1 +  \frac{ a_{i'1}}{a_{i^* j^*} + b - s}} \\
& = s - a_{i' 1} + \frac{a_{i' 1} a_{i^* n_{i^*}}}{a_{i^* n_{i^*}} + b - s} + (|M_P - M_0| - 1)(b-s)\pare{1 +  \frac{ a_{i'1}}{a_{i^* j^*} + b - s}} \\ 
& = b +   (|M_P - M_0| - 2)(b-s)\pare{1 +  \frac{ a_{i'1}}{a_{i^* j^*} + b - s}}.
\end{align*}
It is simple to check that any point in \eqref{eq: facet_set_4} is in $S$. 
Hence, in total, we have found another $n_{i^*}$ affinely independent points in $S$ that satisfy \eqref{ieq: 4} at equality.

Now we arbitrarily pick an index $i'' \in \bar M$. 
Consider the following set:
\begin{equation}
\label{eq: facet_set_5}
\begin{split}
\Big\{x \in [0,1]^d \mid \ & x_{ij} = 0 \ \forall i \notin M_P \text{ and } j \in N_i, x_{i' 1} = 0, x_{i'' n_{i''}} = 0, \\
& x_{i n_i} = 1 \text{ and } x_{ij} = 0\ \forall i \in \bar M - i''  \text{ and } j < n_i,  x_{i^* j} = 0 \ \forall j < n_{i^*},\\
& \sum_{i \in M_P \cap M_0 - i' } a_{i1} x_{i1} + a_{i^* n_{i^*}} x_{i^* n_{i^*}} + \sum_{j < n_{i''}} a_{i'' j} x_{i'' j} = b - \sum_{i \in \bar M - i''} a_{i n_i},\\
& \text{ set } \{x_{i'' 1}, \ldots, x_{i'', n_{i''} - 1}\} \text{ is SOS1}
 \Big\}.
\end{split}
\end{equation}
It is easy to verify that any point in the set \eqref{eq: facet_set_5} satisfies the complementarity constraints in \eqref{constraint: complementarity}, as well as the knapsack constraint~\eqref{constraint: knapsack} (in fact, it is satisfied at equality), and it satisfies \eqref{ieq: 4} at equality. 
Since $P - i' 1$ is a maximal switching pack, we have $\sum_{i \in M_P - i'} a_{i n_i} - a_{i'' n_{i''}} + a_{i'' j} > b$ for any $j < n_{i''}$, therefore $\sum_{i \in M_P \cap M_0 - i' } a_{i1} + a_{i^* n_{i^*}} +  a_{i'' j}  > b - \sum_{i \in \bar M - i''} a_{i n_i}$ for any $j < n_{i'}$. 
Hence, the set \eqref{eq: facet_set_2} contains $|M_P \cap M_0| + n_{i''} - 1$ affinely independent points which all satisfy \eqref{ieq: 3} at equality.

Lastly, for any $\hat i \in \bar M - i''$, we consider the set 
\begin{equation}
\label{eq: facet_set_6}
\begin{split}
\Big\{x \in [0,1]^d \mid \ & x_{ij} = 0 \ \forall i \notin M_P \text{ and } j \in N_i, x_{i' 1} = 0,  x_{\hat i n_{\hat i}} = 0,\\
& x_{i n_i} = 1 \text{ and } x_{ij} = 0\ \forall i \in \bar M - \hat i \text{ and } j < n_i,  x_{i^* j} = 0 \ \forall j < n_{i^*},\\
& a_{i^* n_{i^*}} x_{i^* n_{i^*}} + \sum_{j < n_{\hat i}} a_{\hat i j} x_{\hat i j} = b - s + a_{i^* n_{i^*}} + a_{\hat i n_{\hat i}} + a_{i' 1}, \\
&  x_{i 1} = 1 \ \forall i \in M_P \cap M_0 - i', \text{ set } \{x_{\hat i 1}, \ldots, x_{\hat i, n_{\hat i} - 1}\} \text{ is SOS1}
 \Big\}.
\end{split}
\end{equation}
Again, the set \eqref{eq: facet_set_6} contains $n_{\hat i}$ affinely independent points in $S$, and they all satisfy \eqref{ieq: 4} at equality.

In total, we have found $\sum_{i \notin M_P} n_i + 1 + n_{i^*}  + |M_P \cap M_0 | + n_{i''} - 1 + \sum_{\hat i \in \bar M - i''} n_{\hat i} = d$ points in $S$ that satisfy \eqref{ieq: 4} at equality.
Furthermore, according to their construction, all these points are affinely independent. 
This concludes the proof that \eqref{ieq: 4} is facet-defining when $P - i' 1$ is a maximal switching pack.
\end{proof}

\begin{example}
Consider the same problem studied in Example~\ref{exam: 3}.
Consider $P = \{(1,1), (2,1), (3,2), (4,2), (5,2)\}$, and $i' = 1 \in M_P \cap M_0$. Here $s = \sum_{ij \in P} a_{ij} = 34$. Then both $P$ and $P - i' 1$ are maximal switching packs, satisfying the condition in Theorem~\ref{theo: comp_ineq_3}. For $i^* = 3$, \eqref{ieq: 4} gives the facet-defining inequality 
$$
\frac{5}{6} x_{11} + 6 x_{21} + \pare{\frac{35}{3} x_{31} + 10 x_{32}} + \pare{13 x_{41} + \frac{67}{6} x_{42}} + \pare{12 x_{51} + \frac{61}{6} x_{52}} \leq 38 + \frac{1}{6}.
$$
Similarly, picking $i^* = 4$ and 5, \eqref{ieq: 4} gives another two facet-defining inequalities:
\begin{align*}
& \frac{9}{11} x_{11} + 6 x_{21} + \pare{14 x_{31} + \frac{134}{11} x_{32}} + \pare{\frac{117}{11} x_{41} + 9 x_{42}} + \pare{12 x_{51} + \frac{112}{11} x_{52}} \leq 38 + \frac{2}{11}, \\
& \frac{4}{5} x_{11} + 6 x_{21} + \pare{14 x_{31} + \frac{61}{5} x_{32}} + \pare{13 x_{41} + \frac{56}{5} x_{42}} + \pare{\frac{48}{5} x_{51} + 8 x_{52}} \leq 38 + \frac{1}{5}.
\end{align*}
$\hfill\diamond$
\end{example}

We pose the following conjecture:
\begin{conjecture}
For an arbitrary infeasible solution to \eqref{CKP}, it is \NP-complete to determine if there exists a separating inequality in the form of \eqref{ieq: 4}.
\end{conjecture}

Here we briefly mention the main difference between our complementarity pack inequalities and the lifted cover inequalities \eqref{facet_de_1} and \eqref{facet_de_2}.
As discussed in \cite{de2014branch}, \eqref{facet_de_1} and \eqref{facet_de_2} are both obtained through sequential lifting of the original cover inequalities $\sum_{ij \in C}a_{ij}x_{ij} \leq b$. For a complete survey about lifting procedures, we refer the reader to Section 3 in \cite{hojny2019knapsack}. 
One fundamental property of all lifting procedures is that the difference between the original inequality and the lifted version of the inequality only occurs on the coefficients of variables that do not appear in the original inequality. 
In the cases of lifted cover inequalities \eqref{facet_de_1} and \eqref{facet_de_2}, one can easily verify that the coefficients of variables $x_{ij}$, for $ij \in C$, remain equal to $a_{ij}$. 
However, in the complementarity pack inequalities \eqref{ieq: 2}, \eqref{ieq: 3}, \eqref{ieq: 4}, the coefficients of variables $x_{ij}$, for $ij \in P$, all increase. This implies that our complementarity pack inequalities cannot be easily obtained through some lifting process of the inequality $\sum_{ij \in P}a_{ij} x_{ij} \leq b$.

%
%

\section{Exact separation algorithm}
\label{sec: heuristics}
In Sect.~\ref{sec: complexity}, we settled the separation complexity of two families of cutting-planes introduced in \cite{de2014branch}. Even though we cannot establish the separation complexity for the three complementarity pack inequalities that we proposed in Sect.~\ref{sec: cuts}, in this section we propose an efficient separation algorithm that runs in polynomial-time if $|M|$ is logarithmic in the input size of \eqref{CKP}.  
Throughout this section, assume that $x^*$ is an optimal solution obtained in the process of branch-and-cut, $x^*$ satisfies the knapsack constraint~\eqref{constraint: knapsack} while not being feasible in \eqref{CKP}: $x^*$ violates the complementarity constraint \eqref{constraint: complementarity}. 
Furthermore, w.l.o.g., we assume that $x^*$ satisfies all the \emph{clique inequalities}: $\sum_{j \in N_i} x^*_{ij} \leq 1$ for any $i \in M$, since otherwise we can simply add some clique inequality to separate $x^*$. In the following, we discuss how to separate such point $x^*$ using the cutting-planes that we devised in Sect.~\ref{sec: cuts}. 

We now consider the first family of complementarity pack inequalities \eqref{ieq: 2}. The goal here is to find a pack $P$ such that inequality \eqref{ieq: 2} is violated by $x^*$:
\begin{equation}
\label{eq: sep_1}
    \sum_{i \in M_P} \sum_{j \in N_i} a_{ij} x^*_{ij} + \sum_{\substack{ij \in P,\\ i \in M_P - M_0}} (b-s) x^*_{ij} > b + (|M_P - M_0| - 1) (b-s).
\end{equation}
Let $\bar{M}: = M_P - M_0$, $\bar P: = \{ij \in P \mid i \in \bar M\}$, and $P_0: = \{i0 \in P \mid i \in M_P \cap M_0\}$. Here $P = \bar P \cup P_0$.  
Let $x^*_{i,o_{i,1}} \geq x^*_{i,o_{i,2}} \geq \ldots \geq x^*_{i, o_{i,n_i}}$ for any $i \in M-M_0$, where $o_{i,1}, \ldots, o_{i,n_i}$ are different indices in $ [n_i]$. 
Then we have the next simple result.
\begin{lemma}
\label{lem: sep_heu}
    If $P$ is a pack satisfying \eqref{eq: sep_1}, then $\sum_{ij \in \bar P} (1-x^*_{ij}) < 1,$ and $|\{ij \in \bar P \mid j \neq o_{i,1}\}| \leq 1$.   
\end{lemma}

\begin{proof}
Inequality $\sum_{ij \in \bar P} (1-x^*_{ij}) < 1$ holds naturally from
$$
\sum_{ij \in \bar P} (1-x^*_{ij}) < \frac{\sum_{i \in M_P}\sum_{j \in N_i} a_{ij} x^*_{ij} - s}{b-s} \leq 1.
$$
Here, the first inequality is from \eqref{eq: sep_1}, and the second inequality holds because $x^*$ satisfies the knapsack constraint~\eqref{constraint: knapsack}. 
Next, assume for a contradiction that there exist $i_1j_1, i_2j_2 \in \bar P$ with $j_1 \neq o_{i_1, 1}, j_2 \neq o_{i_2, 1}$. 
Since $x^*$ satisfies the clique inequalities, we have $2x^*_{i_1,j_1} \leq x^*_{i_1, o_{i_1, 1}} + x^*_{i_1,j_1} \leq 1$. Similarly, we also have $x^*_{i_2,j_2} \leq 1/2$. Then, $\sum_{ij \in \bar P} (1-x^*_{ij}) \geq (1-x^*_{i_1,j_1})+(1-x^*_{i_2,j_2}) \geq 1$, which contradicts the first inequality we have shown. 
\end{proof}

Now we focus on $P_0$. Let 
$$f(P): = \sum_{i \in M_P} \sum_{j \in N_i} a_{ij} x^*_{ij} - \sum_{ij \in \bar P} (b-s) (1-x^*_{ij})- s.$$
It is obvious to see that \eqref{eq: sep_1} holds if and only if $f(P) > 0$. Arbitrarily pick $i'0 \in P_0$. Then, basic algebra yields
\begin{equation}
\label{eq: basic_alg}
f(P) - f(P - i'0) = a_{i'0} \left( x^*_{i'0} + \sum_{ij \in \bar P} (1-x^*_{ij}) - 1 \right).
\end{equation}

We are now ready to propose, in Algorithm~\ref{alg:exact_sep}, the first exact separation procedure for \eqref{ieq: 2}.

\begin{algorithm}[H]
   \caption{Exact separation of \eqref{ieq: 2}}
   \label{alg:exact_sep}
       \hspace*{\algorithmicindent} \textbf{Input:} 
       An instance of \eqref{CKP} and an infeasible solution $x^*$. \\
       \hspace*{\algorithmicindent} \textbf{Output:} 
       A separating inequality in the form of \eqref{ieq: 2} if one exists.
    \begin{algorithmic}[1]
    \State Let $x^*_{i,o_{i,1}} \geq x^*_{i,o_{i,2}} \geq \ldots \geq x^*_{i, o_{i,n_i}}$ for any $i \in M-M_0$, where $o_{i,1}, \ldots, o_{i,n_i}$ are different indices in $ [n_i]$. 
    \For {$\bar M \subseteq \{i \in M-M_0 \mid x^*_{i,o_{i,1}} > 0\}$}
    \If {$\sum_{i \in \bar M} (1-x^*_{i,o_{i,1}})\geq 1$}
    \State \textbf{Continue}
    \Else
        \For{$i' \in \bar M$}
            \For{$j' = o_{i', 1}, o_{i', 2}, \ldots, o_{i', n_{i'}}$}
            \State Set $\bar P := i'j' \cup \{(i,o_{i,1}) \mid \forall i \in \bar M-i'\}$. 
            \If {$\sum_{ij \in \bar P} (1-x^*_{ij})\geq 1$}
    \State \textbf{Break}
    \ElsIf{$\bar P$ is not a pack}
    \State \textbf{Continue}
    \Else
    \For{$P_0 \subseteq \{i0 \mid x^*_{i0} > 1- \sum_{ij \in \bar P} (1-x^*_{ij}), i \in M_0\}$}
    \State Set $P:= \bar P \cup P_0$.
    \If{$P$ is a pack and \eqref{eq: sep_1} holds}
    \State \Return: pack $P$ gives a separating inequality \eqref{ieq: 2}.
    \EndIf
    \EndFor
    \EndIf
            \EndFor
        \EndFor
    \EndIf
    \EndFor
\end{algorithmic}
\end{algorithm}

The key intuition behind Algorithm~\ref{alg:exact_sep} is the following: if a certain index $i \in M-M_0$ is contained in $M_P$, then almost certainly $(i,o_{i,1}) \in P$. Here the exception cannot be made more than once according to Lemma~\ref{lem: sep_heu}. Therefore, the main complexity of the algorithm reduces to fixing the index of $\bar M$, which takes at most $O(2^{|M|-|M_0|})$ iterations.
This is much smaller than $O(\prod_{i \in M-M_0} (n_i + 1))$ iterations in a brute force method. 

The following theorem confirms the validity of Algorithm~\ref{alg:exact_sep}.
\begin{theorem}
    Algorithm~\ref{alg:exact_sep} is an exact separation algorithm for \eqref{ieq: 2}. 
\end{theorem}
\begin{proof}
Given the input to this algorithm, a separating inequality exists in the form of \eqref{ieq: 2} if and only if there exists a pack $P$ such that $f(P) > 0$. We show that Algorithm~\ref{alg:exact_sep} finds a pack $P$ with $f(P)>0$ if one exists. 
The procedure followed by the algorithm is: first, we fix $\bar M$, which is the set of indices in $M_P-M_0$. Here we can further assume $x^*_{i,o_{i,1}}$ to be positive for any $i \in \bar M$, since otherwise the condition of line 3 holds. From Lemma~\ref{lem: sep_heu} we know that, except for at most one $i' \in \bar M$, all other $i \in \bar M$ should have $(i,o_{i,1}) \in P$. So for line 6 and line 7, we iterate over all such indices $i' \in \bar M$ and $j' = o_{i', 1}, o_{i', 2}, \ldots, o_{i', n_{i'}}$, and this directly fixes the set $\bar P$. 
Now we assume that condition in line 9 holds. Since the inner loop iterates from $j' = o_{i', 1}$ to $j' = o_{i', n_{i'}}$ which is in decreasing order for $x^*_{i'j'}$, so if $\sum_{ij \in \bar P} (1-x^*_{ij}) < 1$ fails at one step it must also fail in the following steps within the inner loop. Hence it is safe for us to ``break" out of the inner loop in line 10. Once the set $\bar P$ is fixed, in line 14 we iterate over all possible sets for $P_0$. Here we only have to consider subsets in $\{i0 \mid x^*_{i0} > 1- \sum_{ij \in \bar P} (1-x^*_{ij}), i \in M_0\}$, because from \eqref{eq: basic_alg}, if $x^*_{i''0} \leq 1- \sum_{ij \in \bar P} (1-x^*_{ij})$ for some $i'' \in P$, then we can simply remove $i''$ from $P$ and this will not decrease the value of $f(P)$, while maintaining $P$ to be a pack. By iterating through all possible sets $\bar P$ and $P_0$, Algorithm~\ref{alg:exact_sep} will find a pack $P$ with $f(P)>0$ if one exists. 
\end{proof}

Denote by $n:=\sum_{i \in M} n_i$ the number of variables in \eqref{CKP}, and set $m:=|M|$. 
Compared with a brute force algorithm that runs in $O(\prod_{i \in M} (n_i + 1)) = O(n^m)$ iterations to find a valid pack $P$, Algorithm~\ref{alg:exact_sep} only runs in $O(2^{m} \cdot \sum_{i \in M-M_0} n_i) = O(n 2^m)$ iterations. So if $m$ is logarithmic in the input size, then Algorithm~\ref{alg:exact_sep} is a polynomial-time algorithm. 

Based on the same idea used to devise Algorithm~\ref{alg:exact_sep}, we can devise corresponding separation algorithms for the other two types of complementarity pack inequalities \eqref{ieq: 3} and \eqref{ieq: 4}. 
Similarly to Lemma~\ref{lem: sep_heu}, here we have the following result.
\begin{lemma}
\label{lem: sep_heu2}
    If $P$ is a pack and the corresponding inequality \eqref{ieq: 3} or \eqref{ieq: 4} separates $x^*$, then $\sum_{ij \in \bar P, i \neq i^*} (1-x^*_{ij}) < 1$ and $|\{ij \in \bar P \mid i \neq i^*, j \neq o_{i,1}\}| \leq 1$.   
\end{lemma}
\begin{proof}
    Assume that inequality \eqref{ieq: 3} separates $x^*$:
    \begin{equation*}
\begin{split}
& \sum_{i \in M_P - i^*} \sum_{j \in N_i} a_{ij} x^*_{ij} + \sum_{ij \in \bar P, i \neq i^*} (b-s) x^*_{ij} + a_{i^* j^*} x^*_{i^* j^*} \\ 
+ & \sum_{j \in N_{i^*} - j^*} a_{i^* j^*} \max\left\{1, \frac{a_{i^* j} }{ a_{i^* j^*} + b - s} \right\} x^*_{i^* j} > b + (|M_P - M_0| - 2) (b-s).
\end{split}
\end{equation*}
Note that $a_{i^* j^*} \max\left\{1, \frac{a_{i^* j} }{ a_{i^* j^*} + b - s} \right\} \leq a_{i^* j}$ for any $j \in N_{i^*} - j^*$, hence we have:
\begin{align*}
& \sum_{i \in M_P - i^*} \sum_{j \in N_i} a_{ij} x^*_{ij} + \sum_{ij \in \bar P, i \neq i^*} (b-s) x^*_{ij} + \sum_{j \in N_{i^*}} a_{i^* j} x^*_{i^* j} \\
> & b + (|M_P - M_0| - 2) (b-s)\\
= & s + (|\bar P| - 1) (b-s).
\end{align*}
Note that $\sum_{i \in M_P - i^*} \sum_{j \in N_i} a_{ij} x^*_{ij} + \sum_{j \in N_{i^*}} a_{i^* j} x^*_{i^* j} \leq b$. 
Therefore, $\sum_{ij \in \bar P, i \neq i^*} x^*_{ij} < 1$. Following the same argument as in the proof of Lemma~\ref{lem: sep_heu}, this inequality directly implies $|\{ij \in \bar P \mid i \neq i^*, j \neq o_{i,1}\}| \leq 1$. 

Now assume that \eqref{ieq: 4} separates $x^*$. Then:
\begin{equation*}
\begin{split}
 \frac{a_{i^* j^*} \cdot a_{i' 1}}{a_{i^* j^*} + b - s} x^*_{i' 1} + & \sum_{i \in M_P - \{i', i^*\}} \sum_{j \in N_i} a_{ij} x^*_{ij} + \sum_{ij \in \bar P, i \neq i^*} \left(b-s + \frac{(b-s) a_{i'1}}{a_{i^* j^*} + b - s} \right) x^*_{ij} \\
+ & a_{i^* j^*} x^*_{i^* j^*}  +   \sum_{j \in N_{i^*} - j^*} a_{i^* j^*} \max \left\{1, \frac{a_{i^* j} }{ a_{i^* j^*} + b - s} \right\} x^*_{i^* j} \\
> & b + (|\bar P| - 2) (b-s) \pare{1 +  \frac{ a_{i'1}}{a_{i^* j^*} + b - s}} \\
= & s - a_{i' 1} +  \frac{a_{i^* j^*} a_{i'1}}{a_{i^* j^*} + b - s} + \left(b-s + \frac{(b-s) a_{i'1}}{a_{i^* j^*} + b - s} \right) (|\bar P|  - 1).
\end{split}
\end{equation*}
We obtain
\begin{equation*}
\begin{split}
& \left(b-s + \frac{(b-s) a_{i'1}}{a_{i^* j^*} + b - s} \right) \sum_{ij \in \bar P, i \neq i^*}  (1-x^*_{ij})
< \frac{a_{i^* j^*} \cdot a_{i' 1}}{a_{i^* j^*} + b - s} x^*_{i' 1} \\ + & \sum_{i \in M_P - \{i', i^*\}} \sum_{j \in N_i} a_{ij} x^*_{ij} + a_{i^* j^*} x^*_{i^* j^*}  + \sum_{j \in N_{i^*} - j^*} a_{i^* j^*} \max \left\{1, \frac{a_{i^* j} }{ a_{i^* j^*} + b - s} \right\} x^*_{i^* j} \\
& - s + a_{i' 1} -  \frac{a_{i^* j^*} a_{i'1}}{a_{i^* j^*} + b - s}\\
\leq & a_{i' 1} x^*_{i' 1} + \sum_{i \in M_P - \{i', i^*\}} \sum_{j \in N_i} a_{ij} x^*_{ij} + \sum_{j \in N_{i^*}} a_{i^* j} x^*_{i^* j} - s + a_{i' 1} -  \frac{a_{i^* j^*} a_{i'1}}{a_{i^* j^*} + b - s} \\
\leq & b - s + a_{i' 1} -  \frac{a_{i^* j^*} a_{i'1}}{a_{i^* j^*} + b - s} = b-s + \frac{(b-s) a_{i'1}}{a_{i^* j^*} + b - s}.
\end{split}
\end{equation*}
Therefore, we have $\sum_{ij \in \bar P, i \neq i^*}  (1-x^*_{ij}) < 1$, and $|\{ij \in \bar P \mid i \neq i^*, j \neq o_{i,1}\}| \leq 1$ follows naturally. 
\end{proof}


For the cuts \eqref{ieq: 3} and \eqref{ieq: 4}, we give the exact separation algorithms in Algorithm~\ref{alg: 2} and Algorithm~\ref{alg: 3} below. 
The validity of these algorithms follows from Lemma~\ref{lem: sep_heu2}, and we omit the proofs here. 

\begin{algorithm}[H]
   \caption{Exact separation of \eqref{ieq: 3}}
   \label{alg: 2}
       \hspace*{\algorithmicindent} \textbf{Input:} 
       An instance of \eqref{CKP} and an infeasible solution $x^*$. \\
       \hspace*{\algorithmicindent} \textbf{Output:} 
       A separating inequality in the form of \eqref{ieq: 3} if one exists.
    \begin{algorithmic}[1]
    \State Let $x^*_{i,o_{i,1}} \geq x^*_{i,o_{i,2}} \geq \ldots \geq x^*_{i, o_{i,n_i}}$ for any $i \in M-M_0$, here $o_{i,1}, \ldots, o_{i,n_i}$ are different indices in $ [n_i]$. 
    \For{$i^* \in M-M_0$}
    \For {$\bar M \subseteq M-M_0-i^*$}
    \If {$\sum_{i \in \bar M} (1-x^*_{i,o_{i,1}})\geq 1$}
    \State \textbf{Continue}
    \Else
        \For{$i' \in \bar M$}
            \For{$j' = o_{i', 1}, o_{i', 2}, \ldots, o_{i', n_{i'}}$}
            \State Set $\bar P := i'j' \cup \{(i,o_{i,1}) \mid \forall i \in \bar M-i'\}$. 
            \If {$\sum_{ij \in \bar P} (1-x^*_{ij})\geq 1$}
    \State \textbf{Break}
    \ElsIf{$\bar P \cup i^*n_{i^*}$ is not a pack}
    \State \textbf{Continue}
    \Else
    \For{$P_0 \subseteq \{i0 \mid  i \in M_0\}$}
    \State Set $P:= \bar P \cup P_0 \cup i^*n_{i^*}$.
    \If{$P$ is a pack and $x^*$ violates \eqref{ieq: 3} given by $P$ and $i^*$}
    \State \Return: A separating inequality \eqref{ieq: 3}.
    \EndIf
    \EndFor
    \EndIf
            \EndFor
        \EndFor
    \EndIf
    \EndFor
    \EndFor
\end{algorithmic}
\end{algorithm}

\begin{algorithm}[H]
   \caption{Exact separation of \eqref{ieq: 4}}
   \label{alg: 3}
       \hspace*{\algorithmicindent} \textbf{Input:} 
       An instance of \eqref{CKP} and an infeasible solution $x^*$. \\
       \hspace*{\algorithmicindent} \textbf{Output:} 
       A separating inequality in the form of \eqref{ieq: 4} if one exists.
    \begin{algorithmic}[1]
    \State Let $x^*_{i,o_{i,1}} \geq x^*_{i,o_{i,2}} \geq \ldots \geq x^*_{i, o_{i,n_i}}$ for any $i \in M-M_0$, here $o_{i,1}, \ldots, o_{i,n_i}$ are different indices in $ [n_i]$. 
    \For{$i^* \in M-M_0$}
    \For {$\bar M \subseteq M-M_0-i^*$}
    \If {$\sum_{i \in \bar M} (1-x^*_{i,o_{i,1}})\geq 1$}
    \State \textbf{Continue}
    \Else
        \For{$i' \in \bar M$}
            \For{$j' = o_{i', 1}, o_{i', 2}, \ldots, o_{i', n_{i'}}$}
            \State Set $\bar P := i'j' \cup \{(i,o_{i,1}) \mid \forall i \in \bar M-i'\}$. 
            \If {$\sum_{ij \in \bar P} (1-x^*_{ij})\geq 1$}
    \State \textbf{Break}
    \ElsIf{$\bar P \cup i^*n_{i^*}$ is not a pack}
    \State \textbf{Continue}
    \Else
    \For{$P_0 \subseteq \{i0 \mid  i \in M_0\}$}
    \State Set $P:= \bar P \cup P_0 \cup i^*n_{i^*}$.
    \For{$i' \in M_{P_0}$}
    \If{$P$ is a pack and $x^*$ violates \eqref{ieq: 4} given by $P, i^*$ and $i'$}
    \State \Return: A separating inequality \eqref{ieq: 4}.
    \EndIf
    \EndFor
    \EndFor
    \EndIf
            \EndFor
        \EndFor
    \EndIf
    \EndFor
    \EndFor
\end{algorithmic}
\end{algorithm}

\section{Future directions}
\label{sec: future}

In order for the proposed inequalities to be of practical use, it is necessary to develop more efficient separation heuristics. One interesting question for future work is: Can we leverage the intuition of our exact separation algorithms in Sect.~\ref{sec: heuristics} to devise polynomial-time heuristic algorithms? 
To examine the efficacy of those separation methods, an extensive numerical study should be performed. 
Another interesting direction of research spawns from the observation that the embedding conflict graph of the complementarity constraints in \eqref{CKP} is given by the union of some non-overlapping cliques.
An interesting question is then, whether we can gain a deep understanding of the valid inequalities for the following set, with respect to a general graph $G = ([n], E)$:
$$
\conv \left( \left\{x \in [0,1]^n \mid a^T x \leq b, x_i \cdot x_j = 0 \ \forall \{i,j\} \in E\right\} \right).
$$

\bigskip
\noindent
\textbf{Acknowledgments:}
The authors dedicate this paper to the memory of their good colleague Ismael Regis de Farias, whose life and work continues to provide inspiration.
A. Del Pia is partially funded by ONR grant N00014-19-1-2322. Any opinions,
findings, and conclusions or recommendations expressed in this material are those of the authors and do not necessarily reflect the views of the Office of Naval Research.

\bibliographystyle{plain}
\bibliography{biblio}





\end{document}